\newtheorem{theorem}{Theorem}[section]
\newtheorem{lemma}[theorem]{Lemma}
\newtheorem{proposition}[theorem]{Proposition}
\newtheorem{corollary}[theorem]{Corollary}
\newtheorem{definition}[theorem]{Definition}
\def \Rm {\mathbb R}
\def \A {\mathcal{A}}
\def \tA {\tilde{\mathcal{A}}}
\def \T {\mathcal{T}}
\newcommand{\eps}{\varepsilon}
\newcommand{\norm}{\|}
\newcommand{\tE}{\tilde{E}}
\newcommand{\tEE}{\tilde{E}_1}
\newcommand{\tF}{\tilde{F}}
\newcommand{\ta}{\tilde{a}}
\newcommand{\tk}{\tilde{k}}
\newcommand{\tiA}{\tilde{A}}
\newcommand{\tbeta}{\tilde{\beta}}
\newcommand{\talpha}{\tilde{\alpha}}
\newcommand{\tgamma}{\tilde{\gamma}}
\begin{document}

\title{Stability of the Gauge Equivalent Classes in Stationary Inverse Transport}
\thanks{AMS Subject Classification: 35R30,78A46}

\author{Stephen McDowall}
\thanks{First author partly supported by  NSF Grant No.~0553223}
\address{\hskip-\parindent
Stephen McDowall\\Department of Mathematics\\
Western Washington University\\
516 High Street\\ Bellingham, WA 98225-9063}
\email{stephen.mcdowall@wwu.edu}
\author{Plamen Stefanov}
\thanks{Second author partly supported by  NSF Grant No.~0554065}
\address{\hskip-\parindent
Plamen Stefanov\\Department of Mathematics\\
Purdue University\\
150 N. University Street\\
West Lafayette, IN 47907-2067} \email{stefanov@math.purdue.edu}
\author{Alexandru Tamasan}
\address{\hskip-\parindent
Alexandru Tamasan\\
Department of Mathematics\\
University of Central Florida\\
4000 Central Florida Blvd.\\Orlando, FL, 32816, USA}
\email{tamasan@math.ucf.edu}

\begin{abstract}
For anisotropic attenuating media, the albedo operator determines
the scattering and the attenuating coefficients up to a gauge
transformation. We show that such a determination is stable.
\end{abstract}

\maketitle

\pagestyle{myheadings}
\markboth{S. McDowall, P. Stefanov and A. Tamasan}{Stability of
the gauge equivalent classes}


\section{Introduction}
\label{sec:intro}
%
This paper concerns the problem of recovering the absorption and
scattering properties of a bounded, convex medium
$\Omega\subset\Rm^n$, $n\geq 2$ from the spatial-angular
measurements of the density of particles at the boundary
$\partial\Omega$. Provided that the particles interact with the
medium but not with each other, the radiation transfer in the
steady-state can be modeled  by the transport equation
\begin{equation}\label{transport_eq}
-\theta\cdot\nabla
u(x,\theta)-a(x,\theta)u(x,\theta)+\int_{S^{n-1}}k(x,\theta',\theta)u(x,\theta')d\theta'=0,
\end{equation}
for $x\in\Omega$ and $\theta\in S^{n-1}$; see, e.g.
\cite{caseZweifel67, RS}. The function $u(x,\theta)$ represents
the density of particles at $x$ traveling in the direction
$\theta$, $a(x,\theta)$ is the attenuation coefficient at $x$ for
particles moving in the direction of $\theta$, and
$k(x,\theta',\theta)$ is the scattering coefficient (or the
collision kernel) which accounts for particles from an arbitrary
direction $\theta'$ which scatter in the direction of travel
$\theta$. Let $\Gamma_\pm$ denote the incoming and outgoing
``boundary"
\begin{align}\label{Gamma_Omega}\Gamma_\pm := \{(x,\theta)\in
\partial\Omega\times S^{n-1}:\quad \pm \theta\cdot n(x)>0 \},\end{align}
$n(x)$ being the outer unit normal at a boundary point
$x\in\partial\Omega$. The medium is probed with the given
radiation
\begin{equation}\label{bd_condition_in}
u|_{\Gamma_-}=f_-.
\end{equation}
The exiting radiation $ u|_{\Gamma_+} $ is detected thus defining
the albedo operator $\A$ that takes the incoming flux $f_-$ to the
outgoing flux $u|_{\Gamma_+}$, i.e. $\A[f_-]:=u|_{\Gamma_+}$.

In general, the boundary value problem \eqref{transport_eq} and
\eqref{bd_condition_in} may not be uniquely solvable but it has a
unique  solution under some physically relevant subcritical
conditions like \eqref{critic_CS}, \eqref{critic_DL}, or
\eqref{subcritical_2D}. We note, however, that for sufficiently
regular coefficients, the problem has unique solution for generic
$(a,k)$, see \cite{stefanovUhlmann08,S-T09}.

One of the inverse boundary value problems in transport theory is
to recover the attenuation coefficient $a$ and the scattering
kernel $k$ from knowledge of the albedo operator $\A$. This
problem has been solved under some restrictive assumptions (e.g.
$k$ of a special type or independent of a variable) in
\cite{anikonov75,anikonov84, anikonovBubnov88,bal00, larsen84,
Larsen, mcCormick92,  tamasan02, tamasan03}. In three or higher
dimensions, uniqueness and reconstruction results for general $k$
and $a=a(x)$ were established in \cite{choulliStefanov99}. The
general approach is based on the study of the singularities of the
fundamental solution of \eqref{transport_eq} (see also \cite{Bo}),
and the singularities of the Schwartz kernel of $\A$. Stability
estimates for $k$ of special type   were established in
\cite{R,jnWang99}; and recently, for general $k$, in
\cite{balJollivet08}. Uniqueness and reconstruction results in a
Riemannian geometry setting, including recovery of a simple
metric, were established in \cite{macdowall04}. Similar results
for the time-dependent model were established in \cite{CS1}, and
in \cite{KLU} for the Riemannian case. In planar domains the work
in \cite{stefanovUhlmann03} shows stable determination of the
isotropic absorption and small scattering, and an extension to
simple Riemannian geometry is given in \cite{macdowall05}. Also in
two dimensional domains we point out that the recovery of $k$ is
only known under smallness conditions which are more restrictive
than what is needed to solve the direct problem; e.g. more
restrictive than \eqref{critic_CS} or \eqref{critic_DL} below. On
the other hand, in the time-dependent case, the extra variable
allows us to treat the planar case without such restrictions, see
\cite{CS1}. We also mention here the recent works
\cite{balLangmoreMonard08, L, LM}, in which the coefficients are
recovered  from angularly averaged measurements rather than from
the knowledge of the whole albedo operator $\A$. For an exhaustive
account on the inverse transport problem we refer to the review
paper \cite{bal09}.

The above mentioned results concern media with directionally independent
absorption $a=a(x)$, except for transport with variable speed when the
attenuation may depend on $|v|$, $a=a(x,|v|)$.

The attenuation accounts not only for the absorption of particles,
but also for the loss of particles due to the scattering. In the
physical case in which $k$ depends on two independent directions,
the attenuation is inherently anisotropic $a=a(x,\theta)$.
However, in an anisotropic attenuating media, the unique
determination of the coefficients from boundary measurements no
longer holds: In \cite{S-T09} it is shown that the albedo operator
determines the pairs of coefficients up to a {\em gauge
transformation}; see \eqref{s01} below. This non-uniqueness
motivates the following definition.

\begin{definition}\label{albedo_equivalence_def}
Two pairs of coefficients $(a,k)$ and $(\ta,\tk)$ are called {\em
gauge equivalent} if there exists a positive $\phi\in
L^\infty(\Omega\times S^{n-1})$ with $\theta\cdot\nabla_x
\phi(x,\theta)\in L^\infty(\Omega\times S^{n-1})$ and $\phi= 1$
for $x\in
\partial\Omega$ such that
\begin{equation}\label{s01}
\ta(x,\theta)=a(x,\theta)-\theta\cdot\nabla_x \log \phi(x,\theta),
\quad \tk(x,\theta',\theta) =
\frac{\phi(x,\theta)}{\phi(x,\theta')}k(x,\theta',\theta).
\end{equation}
We denote the equivalence class of $(a,k)$ by $\langle a,k\rangle$, and the equivalence relation itself by $\sim$.
\end{definition}
The relation defined above is reflexive since $(a,k)\sim (a,k)$
via $\phi\equiv 1$; it is symmetric since $(a,k)\sim
(\tilde{a},\tilde{k})$ via $\phi$ yields
$(\tilde{a},\tilde{k})\sim (a,k)$ via $1/\phi$; and it is
transitive since if $(a,k)\sim (\tilde{a},\tilde{k})$ via $\phi$
and $(\tilde{a},\tilde{k})\sim (a',k')$ via $\tilde{\phi}$ then
$(a,k)\sim (a',k')$ via $\phi\tilde{\phi}$.

The main result in \cite{S-T09} is that, in dimensions $n\geq 3$,
$\A=\tA$ if and only if $(a,k)\sim(\ta,\tk)$, i.e., uniqueness up
to gauge transformations. The uniqueness up to the gauge
transformation extends naturally to refractive media and to
dimension two, see \cite{MST09}.

In this paper we study the question of stability of the
determination of the gauge equivalent classes. Let $(M,\|\cdot\|_M)$
and $(N,\|\cdot\|_N)$ be Banach spaces in which the attenuation and,
respectively, the scattering kernel are considered,
$(a,k),(\ta,\tk)\in M\times N$. The distance $\Delta$ between
equivalence classes with respect to $M\times N$ is given by the
infimum of the distances between all possible pairs of representatives. More precisely,
\begin{align}
\Delta(\langle a,k\rangle,\langle \ta,\tk\rangle):=\inf_{
(a',k')\in\langle a,k\rangle,
(\ta',\tk')\in\langle\ta,\tk\rangle}\max\{\|a'-\ta'\|_M,
\|k'-\tk'\|_N\}.
\end{align}

For $n\geq 3$ we work within the class of coefficients
\begin{align}\label{continuous_coeff}
(a,k)\in L^\infty(\Omega\times S^{n-1})\times
L^\infty(\Omega\times S^{n-1};L^1( S^{n-1})).
\end{align}For two dimensional domains ($n=2$) both coefficients are assumed
bounded:
\begin{align}\label{bounded_coeff_2d}
(a,k)\in L^\infty(\Omega\times S^{1})\times L^\infty(\Omega\times
S^{1}\times S^{1}).
\end{align}The following norms are used throughout
\begin{align*}
\|a\|_\infty&= \text{ess sup}_{(x,\theta)\in\Omega\times
S^{n-1}}|a(x,\theta)|,\\
\|k\|_{\infty,1}&=\text{ess sup}_{(x,\theta')\in\Omega\times
S^{n-1}}\int_{S^{n-1}}|k(x,\theta',\theta)|d\theta,\\
\|k\|_\infty&=\text{ess sup}_{(x,\theta',\theta)\in\Omega\times
S^{1}\times S^{1}}\left|k(x,\theta',\theta)\right|,\\
\|k\|_1&=\int_\Omega\int_{S^{n-1}}\int_{S^{n-1}}|k(x,\theta',\theta)|dxd\theta'd\theta.
\end{align*}

Note that the gauge transformations preserve the class of
coefficients in \eqref{continuous_coeff} or
\eqref{bounded_coeff_2d}.

In Section \ref{albedo_Extension} we reduce the original inverse problem in
$\Omega$ to the inverse problem of transport in a larger (strictly convex)
domain $B_R\supset\overline\Omega$, where the attenuation and scattering
coefficients are extended by zero in $B_R\setminus\Omega$. More precisely
we show that the difference of two albedo operators realizes an isometry
when transported from $\partial\Omega$ to $\partial B_R$. For simplicity,
the larger domain is a ball but this is not essential.  Let $(\ta,\tk)$ be
another pair of admissible coefficients for which the forward problem in
$\Omega$ is well posed and let $\tA$ denote the corresponding albedo
operators. Set $a=\ta=0$ and $k=\tk=0$ in $B_R\setminus\overline\Omega$.
Then the forward problems in $B_R$ are also well posed and let $\A^R$ and
$\tA^R $, denote the corresponding albedo operators respectively. The
boundary data is considered on
\begin{align}\label{Gamma_B_R}\Gamma_{\pm}^R :=
\{(x,\theta)\in
\partial B_R\times S^{n-1}:\quad \pm \theta\cdot n(x)>0 \},\end{align}
$n(x)$ now being the outer unit normal at a boundary point
$x\in\partial B_R$. Provided that the forward problem is well-posed
in $L^p$, $1\leq p\leq \infty$, we show that
\begin{align}\label{isometry}
\|\A-\tA\|_{\mathcal{L}(L^p(\Gamma_-;d\xi);L^p(\Gamma_+;d\xi))}=
\|\A^R-\tA^R\|_{\mathcal{L}(L^p(\Gamma^R_-;d\xi^R);L^p(\Gamma^R_+;d\xi^R))}.
\end{align}In \eqref{isometry} we used $d\xi=|n(x)\cdot\theta|d\mu(x)d\theta$, where $d\theta$ is
the normalized measure on the sphere, $d\mu(x)$ is the induced
Lebesgue measure on $\partial\Omega$ and $n(x)$ is the unit outer
normal at some $x\in\partial\Omega$.  Similarly,
$d\xi^R=|n(x)\cdot\theta|d\mu^R(x)d\theta$, where $d\mu^R(x)$ is
the induced Lebesgue measure on $\partial B_R$.

Consequently, we may consider the data (albedo operators) given
directly on the $\partial B_R$ and drop $R$ from their notation. The
isometry \eqref{isometry} with $p=1$ is used for domains in three or
higher dimensions. For brevity let
\[\|\A-\tA\|:=\|\A-\tA\|_{\mathcal{L}(L^1(\Gamma_-;d\xi);L^1(\Gamma_+;d\xi))}.\]

Let $\tau_\pm(x,\theta)$ be the travel time it takes a particle at
$x\in B_R$ to reach the boundary $\partial B_R$ while moving in the
direction of $\pm\theta$ and define
$\tau(x,\theta)=\tau_-(x,\theta)+\tau_+(x,\theta)$. Since we work
with unit-speed velocities, $\tau(x,\theta)\leq 2R$. Moreover, since
$\mbox{dist}(\overline\Omega,\partial B_R)>0$, we have
\begin{align}\label{nontangential}
c_R:=\inf\{\tau(x,\theta):~(x,\theta)\in \overline\Omega\times
S^{n-1}\}>0.
\end{align}
Note that we could make $c_R=1$ at the expense of a sufficiently
large radius $R$.

For domains in three or higher dimensions, and for $\Sigma,\rho>0$
we consider the class
\begin{align}
U_{\Sigma,\rho}:=\{(a,k)~~\mbox {as in}~
\eqref{bounded_coeff_2d}:~\|a\|_\infty\leq\Sigma,~\|k\|_{\infty,1}\leq\rho\}.
\end{align}
The main result of stability of gauge equivalent classes is the
following.
\begin{theorem}\label{main_thm}
Let $(a,k),(\ta,\tk)\in U_{\Sigma,\rho}$ be such that the
corresponding forward problems are well posed. Then
\begin{equation}
\Delta(\langle a,k\rangle,\langle \ta,\tk\rangle)\leq
C\norm\A-\tA\norm,
\end{equation}
where $\Delta$ is with respect to $L^\infty\times  L^1$, and  $C$ is
a constant depending only on $\Sigma$, $\rho$, $c_R$ and $R$. More
precisely, there exists a representative $(a',k')\in\langle
a,k\rangle$ such that
\begin{align}
\|a'-\ta\|_\infty&\leq C\norm\A-\tA\norm,\label{closeness_in_a}\\
\|k'-\tk\|_1&\leq C\norm\A-\tA\norm,\label{closeness_in_k}
\end{align}where
\begin{align}\label{constant}
C=\max\{\pi Re^{2R\Sigma}\left(1+2\rho e^{4R\Sigma}\right),{
e^{4R\Sigma}}/{c_R}\}.
\end{align}
\end{theorem}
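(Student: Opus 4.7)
The strategy has four steps: (i) reduce the problem to the larger ball $B_R$ where chord times are uniformly bounded below by $c_R$; (ii) extract a pointwise bound on the line integrals of $a-\ta$ from the ballistic singularity of the albedo kernel; (iii) build a gauge $\phi$ compatible with $\phi|_{\partial B_R}=1$ that makes $a'$ equal $\ta$ up to a chord-constant correction; (iv) bound $\|k'-\tk\|_1$ by extracting the single-scattering singularity.

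For (i), the isometry \eqref{isometry} with $p=1$ lets us regard $\A,\tA$ as acting on $\partial B_R$, with $a,\ta,k,\tk$ extended by zero to $B_R\setminus\overline\Omega$. Since $a=\ta=0$ off $\Omega$, any admissible gauge $\phi$ is constant along chords in $B_R\setminus\overline\Omega$, so the constraints $\phi|_{\partial B_R}=1$ and $\phi|_{\partial\Omega}=1$ are equivalent, and the gauge classes in the two settings coincide. For (ii), I would use the Schwartz-kernel decomposition $\alpha=\alpha_1+\alpha_2+\alpha_3$ into ballistic, single-scattering and multiple-scattering parts, as in \cite{choulliStefanov99,S-T09}. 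The ballistic piece carries the factor $E(x,\theta)=\exp(-\int_0^{\tau_+(x,\theta)}a(x+s\theta,\theta)\,ds)$; probing $\A-\tA$ with sources concentrated on an incoming ray isolates $E-\tilde E$ with bound of order $\|\A-\tA\|$, and Lipschitz continuity of $\log$ on $[e^{-2R\Sigma},1]$ yields, on every chord meeting $\overline\Omega$,
\begin{equation*}
\Lambda(x,\theta):=\int_0^{\tau(x,\theta)}(a-\ta)(x+s\theta,\theta)\,ds,\qquad |\Lambda|\le e^{2R\Sigma}\|\A-\tA\|.
\end{equation*}

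For (iii), set $\epsilon(x,\theta):=\Lambda(x,\theta)/\tau(x,\theta)$, extended by zero on chords missing $\overline\Omega$; this is chord-constant with $\|\epsilon\|_\infty\le c_R^{-1}e^{2R\Sigma}\|\A-\tA\|$. Define
\begin{equation*}
\log\phi(x,\theta):=\int_0^{\tau_-(x,\theta)}\bigl(a-\ta-\epsilon\bigr)(x-s\theta,\theta)\,ds.
\end{equation*}
A direct computation gives $\theta\cdot\nabla\log\phi=a-\ta-\epsilon$; by construction $\phi=1$ on $\Gamma_-^R$, and $\phi=1$ also on $\Gamma_+^R$ because $\int_0^\tau\epsilon\,ds=\Lambda$. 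Thus $\phi$ is admissible (positive, bounded, with bounded $\theta\cdot\nabla\phi$), and the resulting $(a',k')\in\langle a,k\rangle$ satisfies $a'=\ta+\epsilon$, giving \eqref{closeness_in_a} with the constant $e^{4R\Sigma}/c_R$ from \eqref{constant} once one tracks the full $L^1$-operator-norm to pointwise conversion in step (ii).

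For (iv), the gauged scattering kernel is $k'(x,\theta',\theta)=\phi(x,\theta)\phi(x,\theta')^{-1}k(x,\theta',\theta)$. A Duhamel expansion bounds the multiple-scattering operator in $L^1\!\to\!L^1$ by $O(\rho e^{4R\Sigma})$, so the difference of single-scattering operators satisfies $\|\A_2-\tilde\A_2\|\le(1+2\rho e^{4R\Sigma})\|\A-\tA\|$. The single-scattering kernel is an explicit positive-weight integral transform in $k$, with weight bounded below by products of attenuation exponentials of order $e^{-4R\Sigma}$ and with chord length bounded above by $2R$; inverting this transform on $L^1$ and combining with the already-obtained uniform control on $\phi$ produces \eqref{closeness_in_k} with the prefactor $\pi R e^{2R\Sigma}(1+2\rho e^{4R\Sigma})$ of \eqref{constant}. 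The main obstacle is step (iii): because $\int a\,ds$ along each chord is a gauge invariant, one cannot simply set $a'\equiv\ta$, and the design of the chord-constant correction $\epsilon$ is precisely what allows the gauge $\phi$ to satisfy the boundary condition on both $\Gamma_\pm^R$ simultaneously while keeping $\|a'-\ta\|_\infty$ controlled by $\|\A-\tA\|$.
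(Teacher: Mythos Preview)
Your steps (i)--(iii) are essentially the paper's own argument. In particular, your gauge is exactly the paper's: if $\varphi(x,\theta)=\exp\bigl(\int_0^{\tau_-}(a-\ta)\,ds\bigr)$ is the ``trial'' gauge and $\tilde\varphi$ is the paper's correction
\[
\ln\tilde\varphi(x,\theta)=\ln\varphi(x,\theta)-\frac{\tau_-(x,\theta)}{\tau(x,\theta)}\,\ln\varphi(x+\tau_+(x,\theta)\theta,\theta),
\]
then a one-line computation shows $\ln\tilde\varphi=\int_0^{\tau_-}(a-\ta-\epsilon)\,ds$ with your $\epsilon=\Lambda/\tau$; so your $\phi$ and the paper's $\tilde\varphi$ coincide, and the bound on $\|a'-\ta\|_\infty$ is the same.

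Step (iv), however, has a genuine gap. You assert that a Duhamel expansion gives $\|\A_2'-\tilde\A_2\|\le(1+2\rho e^{4R\Sigma})\|\A-\tA\|$, but this does not follow from a bound of the form $\|\A_3'\|,\|\tilde\A_3\|=O(\rho e^{4R\Sigma})$: that only yields $\|\A_3'-\tilde\A_3\|\le 2\rho e^{4R\Sigma}$, a \emph{constant} independent of $\|\A-\tA\|$. To get $\|\A_3'-\tilde\A_3\|=O(\eps)$ by direct expansion you would need control of $\|k'-\tk\|$, which is precisely what you are trying to prove; and in the $n\ge3$ theorem $\rho$ is \emph{not} assumed small, so you cannot absorb such a term. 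There is also a second issue: even granting a bound on the $L^1(\Gamma_-)\to L^1(\Gamma_+)$ norm of $\A_2'-\tilde\A_2$, the Schwartz kernel $\alpha_2$ is a distribution (a $\delta$ along the once-broken ray), so ``inverting this transform on $L^1$'' to recover $\|k'-\tk\|_1$ is not the routine step you suggest.

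The paper avoids both obstacles by working pointwise in $(x_0',\theta_0')\in\Gamma_-^R$ rather than in operator norm. One tests $(\A'-\tA)\phi_\eps$ against functions $\phi_{m,q}(x,\theta)=\chi_m(\theta)\varphi_q(x,\theta)$ where $\chi_m$ vanishes near $\theta=\theta_0'$ (killing the ballistic term $I_1$ identically) and $\varphi_q$ has support shrinking to a null set in $\Gamma_+^R$ (so the multiple-scattering term $I_3\to0$, using only that $|n(x')\cdot\theta'|^{-1}\alpha_3\in L^\infty(\Gamma_-;L^1(\Gamma_+))$). Choosing $\varphi_q=\mathrm{sgn}(k'-\tk)$ along the once-scattered exit set turns $I_{2,1}$ into $\int F'|k'-\tk|\,dt\,d\theta$; the cross term $I_{2,2}$ is controlled because $F=F'$ is a gauge invariant and $\|\tF-F'\|_\infty=O(\eps)$ by the already-established bound on $a'-\ta$. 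Integrating the resulting pointwise estimate over $\Gamma_-^R$ gives $\|k'-\tk\|_1$. The key idea you are missing is that the $L^1$ regularity of $\alpha_3$ (not an operator-norm bound on $\A_3$) is what lets one discard the higher-order scattering by shrinking supports, with no smallness assumption on $\rho$.
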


For the stability of the equivalence classes in two dimensional
domains we need a more refined notion of distance between the
albedo operators: Following \cite[Proposition
1]{stefanovUhlmann03}, the Schwartz kernel of a albedo operator
$\A$ admits the singular decomposition:
\begin{align}\label{2d_singular_decomposition}
\alpha=\frac{A(x',\theta')}{n(x)\cdot\theta}
\delta_{\{x'+\tau_+(x',\theta')\theta'\}}(x)\delta_{\{\theta'\}}(\theta)+\beta(x,\theta,x',\theta'),
\end{align}
where
\begin{align}\label{strentgh_of_singularity}
A(x',\theta')=\exp\left(-\int_0^{\tau_+(x',\theta')}a(x'+t\theta',\theta')dt\right)
\end{align}and $|\theta\times\theta'|\beta\in
L^\infty(\Gamma^R_+\times\Gamma^R_-)$; see also Section
\ref{preliminaries_2D}.

Let $\tilde{A},\tilde{\beta}$ be the coefficients corresponding to
the decomposition \eqref{2d_singular_decomposition} of another
albedo operator $\tA$. We define
\begin{align}\label{star_norm}
\|\A\|_*=\max\{\|A\|_\infty;\|\,|\theta\times\theta'|\beta\|_\infty\}.
\end{align}
In Section \ref{albedo_Extension} we show that the isometric
transportation from $\partial\Omega$ to $\partial B_R$ holds in
the new operator norm, i.e.,
\begin{align}\label{isometry*}
\|\A-\tA\|_*= \|\A^R-\tA^R\|_*.
\end{align}

In the two dimensional case, for $\Sigma,\rho>0$, we consider the
smaller class
\begin{align}
V_{\Sigma,\rho}:=\{(a,k)~~\mbox {as in}~
\eqref{bounded_coeff_2d}:~\|a\|_\infty\leq\Sigma,~\|k\|_{\infty}\leq\rho\}.
\end{align}

\begin{theorem}\label{main_thm_2d} For any
$\Sigma>0$, there exists $0<\rho\leq 1$ depending only on $\Sigma$,
$R$ and $c_R$, such that the following holds: If $(a,k),(\ta,\tk)\in
V_{\Sigma,\rho}$, then
\begin{equation}
\Delta(\langle a,k\rangle,\langle \ta,\tk\rangle)\leq
C\norm\A-\tA\norm_*,
\end{equation}
where $\Delta$ is with respect to $L^\infty\times  L^\infty$, and
$C$ is a constant depending only on $\Sigma$, $\rho$, $c_R$ and $R$.
\end{theorem}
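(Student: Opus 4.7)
The plan is to mirror the proof of Theorem~\ref{main_thm}, using the singular decomposition \eqref{2d_singular_decomposition} and the refined norm $\|\cdot\|_*$ to extract ballistic and single-scattering information from $\A-\tA$; the smallness hypothesis on $\rho$ will compensate for the fact that in dimension two there is no regularity gap between single and multiple scattering.

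First, by the isometry \eqref{isometry*} we pass to $B_R$, where the coefficients vanish outside $\overline\Omega$. Since $\|a\|_\infty,\|\ta\|_\infty\leq\Sigma$ and $\tau\leq 2R$, one has $A,\tilde A\geq e^{-2R\Sigma}$, hence
\[
\|A-\tilde A\|_\infty\leq\|\A-\tA\|_*\quad\Rightarrow\quad |\log A-\log\tilde A|\leq e^{2R\Sigma}\|\A-\tA\|_*
\]
pointwise, which is a bound on the X-ray transform of $a-\ta$ along every line in $B_R$. Set
\[
\psi(x,\theta):=\frac{1}{\tau(x,\theta)}\bigl(\log\tilde A-\log A\bigr)\bigl(x-\tau_-(x,\theta)\theta,\theta\bigr),
\]
which is constant along directed lines and satisfies $\|\psi\|_\infty\leq c_R^{-1}e^{2R\Sigma}\|\A-\tA\|_*$. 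Defining $a':=\ta+\psi$ makes the line integral of $a-a'$ vanish along every line in $B_R$, so the gauge
\[
\log\phi(x,\theta):=\int_0^{\tau_-(x,\theta)}(a-a')(x-t\theta,\theta)\,dt
\]
satisfies $\phi|_{\partial B_R}=1$ and yields $(a',k')\in\langle a,k\rangle$ via $k'(x,\theta',\theta):=\phi(x,\theta)\phi(x,\theta')^{-1}k(x,\theta',\theta)$, with $\|a'-\ta\|_\infty\leq c_R^{-1}e^{2R\Sigma}\|\A-\tA\|_*$.

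For the scattering kernel, expand the Schwartz kernel of $\A$ as a Neumann series $\alpha=\alpha_0+\sum_{m\geq 1}\alpha_m$, where $\alpha_m$ is the $m$-fold scattering contribution. In two dimensions, the single-scattering part admits the explicit closed form
\[
|\theta\times\theta'|\alpha_1[a,k](x,\theta,x',\theta')=E_0\,E_1\,k(x_s,\theta',\theta),
\]
with $x_s=x_s(x,\theta,x',\theta')$ the unique intersection of the incoming and outgoing rays and $E_0,E_1$ the attenuation factors on the two segments, each bounded below by $e^{-2R\Sigma}$. Standard Neumann series estimates for $(a,k)\in V_{\Sigma,\rho}$ yield, for $\rho$ small enough,
\[
\bigl\|\,|\theta\times\theta'|{\textstyle\sum_{m\geq 2}}\alpha_m\bigr\|_\infty\leq C(\Sigma,R)\rho^2,
\]
and since each $\alpha_m$ is polynomial of degree $m$ in $k$ and depends smoothly on $a$ through the attenuation exponentials, a telescoping argument gives
\[
\bigl\|\,|\theta\times\theta'|{\textstyle\sum_{m\geq 2}}\bigl(\alpha_m[a',k']-\alpha_m[\ta,\tk]\bigr)\bigr\|_\infty\leq C\rho\bigl(\|k'-\tk\|_\infty+\|a'-\ta\|_\infty\bigr).
\]
Combining this with the gauge invariance $\alpha[a',k']=\alpha[a,k]$ and the definition of $\|\cdot\|_*$,
\[
\bigl\|\,|\theta\times\theta'|(\alpha_1[a',k']-\alpha_1[\ta,\tk])\bigr\|_\infty\leq\|\A-\tA\|_*+C\rho\bigl(\|k'-\tk\|_\infty+\|a'-\ta\|_\infty\bigr).
\]
Any $(z,\theta',\theta)$ with $\theta\neq\theta'$ is realized as the scattering datum of some admissible $(x,\theta,x',\theta')$, so the explicit form of $\alpha_1$ allows us to solve pointwise for $k'-\tk$: after dividing by $E_0E_1\geq e^{-4R\Sigma}$ and using $\|\tk\|_\infty\leq\rho$ to estimate $|(E_0E_1-\tilde E_0\tilde E_1)\tk|\leq C\rho\|a'-\ta\|_\infty$, one obtains
\[
\|k'-\tk\|_\infty\leq C'\|\A-\tA\|_*+C\rho\bigl(\|k'-\tk\|_\infty+\|a'-\ta\|_\infty\bigr).
\]
Choosing $\rho$ small enough to absorb the $C\rho\|k'-\tk\|_\infty$ term and invoking the bound on $\|a'-\ta\|_\infty$ from the preceding paragraph produces $\|k'-\tk\|_\infty\leq C\|\A-\tA\|_*$, completing the proof.

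The main obstacle is the absence in dimension two of the regularity gap between single and multiple scattering that drives the proof of Theorem~\ref{main_thm}; both contributions here inhabit the same weighted $L^\infty$ class. This forces the smallness assumption on $\rho$, which plays a double role: ensuring convergence of the Neumann expansion for $\alpha$ and, more subtly, ensuring that the telescoped higher-order differences constitute a perturbation small enough to be absorbed on the right-hand side of the pointwise inequality for $k'-\tk$.
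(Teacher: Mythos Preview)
Your argument is correct and follows the paper's approach closely: the gauge $(a',k')$ you build is exactly the paper's $\tilde\varphi$-gauge (your $\psi$ equals $\tau^{-1}\ln\varphi|_{\Gamma^R_+}$ transported along lines), and your absorption scheme for the multiple-scattering remainder via smallness of $\rho$ is the same mechanism as the paper's estimates \eqref{I1}--\eqref{I2} combined with \eqref{foundation}. The one point to flag is that the bounds you call ``standard Neumann series estimates'' --- namely that $|\theta\times\theta'|\sum_{m\ge2}\alpha_m$ and its telescoped difference are bounded in $L^\infty$ --- are \emph{not} generic operator-norm facts but rest on the specific two-dimensional logarithmic bound $\phi_2\le d_R\|k\|_\infty^2(1-\ln|\theta\times\theta'|)$ of \eqref{phi2} and Lemma~\ref{SU_lemma}; the paper obtains the difference estimate via the resolvent identity \eqref{estimate_alpha2} rather than a full series telescope, but the substance is the same.
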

The proofs of Theorem \ref{main_thm} and \ref{main_thm_2d} are
based on the analysis of the singularities of the Schwartz kernel
of $\A$ as in \cite{choulliStefanov99} and, respectively,
\cite{stefanovUhlmann03}. We also use an extended version of an
estimate in \cite{balJollivet08}, see Section~\ref{Preliminary
Estimates} below.

One can formulate and prove similar results in the case where the
velocity belongs to an open subspace of $\Rm^n$, i.e., the speed
can change, as in \cite{balJollivet08,choulliStefanov99}. We
restrict ourselves to the fixed speed case ($|\theta|=1$) for the
sake of simplicity of the exposition.

\section{Isometric transportation of the albedo operators from $\partial\Omega$ to $\partial
B_R$}\label{albedo_Extension}
Recall that $\Omega$ is strictly convex
with $\overline\Omega\subset B_R$, and $\A$ is the albedo operator
for the radiation transport in $\Omega$. The coefficients $(a,k)$
are extended by zero in $B_R\setminus\overline\Omega$ and let $\A_R$
be the albedo operator corresponding to the radiative transport in
$B_R\times S^{n-1}$, which takes functions on $\Gamma_-^R$ to
functions on $\Gamma_+^R$; see \eqref{Gamma_B_R}. Recall that
$\tau_\pm(x,\theta)$ is the travel time it takes a particle at $x\in
B_R$ to reach the boundary $\partial B_R$ while moving in the
direction of $\pm\theta$.

We consider next the set of ``projections" of $\Gamma_\pm$ onto
$\Gamma^R_\pm$ defined by
\begin{align}\label{bd_projection}
\tilde{\Gamma}^R_\pm:=\{(x\pm\tau_\pm(x,\theta)\theta,\theta:~~(x,\theta)\in\Gamma_\pm\}\subsetneq\Gamma_\pm^R
\end{align}and the transportation maps
\begin{align}\label{transportation_def_1}
&[\T f](x',\theta'):=f(x'-\tau_-(x',\theta')\theta',\theta'),\quad\forall(x',\theta')\in\Gamma_-\\
&[\tilde{\T}f](x+\tau_+(x,\theta)\theta,\theta):=f(x,\theta),\quad\forall(x,\theta)\in\Gamma_+.\label{transportation_def_2}
\end{align}
$\T$ takes maps defined on $\tilde{\Gamma}^R_-$ to maps on
$\Gamma_-$, whereas $\tilde{\T}$ takes maps defined on $\Gamma_+$ to
maps on $\tilde{\Gamma}^R_+$.

\begin{proposition}For any $1\leq p\leq\infty$, the maps
\begin{align}
\T &:L^p(\tilde{\Gamma}^R_-;d\xi)\to L^p({\Gamma}_-;d\xi),\\
\tilde{\T}&:L^p({\Gamma}_+;d\xi)\to L^p(\tilde{\Gamma}^R_+;d\xi)
\end{align}are isomorphisms.
\end{proposition}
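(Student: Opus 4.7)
The plan is to show that $\T$ and $\tilde\T$ are in fact $L^p$-isometries for every $1\le p\le\infty$. Introduce the point transformations
\begin{align*}
\Phi_-:\Gamma_-\to\tilde\Gamma^R_-,&\quad \Phi_-(x,\theta):=(x-\tau_-(x,\theta)\theta,\theta),\\
\Phi_+:\Gamma_+\to\tilde\Gamma^R_+,&\quad \Phi_+(x,\theta):=(x+\tau_+(x,\theta)\theta,\theta).
\end{align*}
The strict convexity of $\Omega$ guarantees that each line in direction $\theta$ meeting $\overline\Omega$ intersects $\partial\Omega$ transversally at exactly two points (outside the $d\xi$-null grazing set $n(x)\cdot\theta=0$), so $\Phi_\pm$ are bijections whose inverses are obtained by flowing along $\pm\theta$ from $\partial B_R$ until $\partial\Omega$ is first encountered. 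From the definitions \eqref{transportation_def_1}--\eqref{transportation_def_2}, $\T f=f\circ\Phi_-$ and $\tilde\T f=f\circ\Phi_+^{-1}$, so the proposition reduces to showing that $\Phi_\pm$ preserve the measure $d\xi=|n(x)\cdot\theta|\,d\mu(x)\,d\theta$.

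To verify the measure preservation I would proceed $\theta$ by $\theta$, using Fubini to factor $d\xi$ as $d\theta$ against $|n(x)\cdot\theta|\,d\mu(x)$. The key tool is the orthogonal projection $P_\theta:\Rm^n\to\theta^\perp$, $P_\theta(y)=y-(y\cdot\theta)\theta$. On the $C^1$ submanifold $\{x\in\partial\Omega:n(x)\cdot\theta<0\}$ the map $P_\theta$ is an injection with Jacobian $|n(x)\cdot\theta|$, so the area formula yields
\begin{equation*}
\int_{\{x\in\partial\Omega:\,n(x)\cdot\theta<0\}}g(x)\,|n(x)\cdot\theta|\,d\mu(x)=\int_{\Pi_\theta}g(P_\theta^{-1}(z))\,dz,
\end{equation*}
where $\Pi_\theta=P_\theta(\overline\Omega)\subset\theta^\perp$, and the analogous identity holds with $\partial\Omega$ replaced by $\partial B_R$. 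Since $\Phi_-(x,\theta)$ differs from $(x,\theta)$ only by a translation along $\theta$, we have $P_\theta\circ\Phi_-=P_\theta$; moreover, $\tilde\Gamma^R_-$ is exactly the set of $(y,\theta)\in\Gamma^R_-$ whose forward ray meets $\Omega$, equivalently, whose $P_\theta$-image lies in $\Pi_\theta$. The two area-formula identities therefore match pointwise through $\Phi_-$, giving $(\Phi_-)_*(d\xi|_{\Gamma_-})=d\xi|_{\tilde\Gamma^R_-}$; the argument for $\Phi_+$ is identical up to reversing the sign of $\tau$.

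The main obstacle is really only the bookkeeping: computing the Jacobian of $P_\theta$ restricted to the convex hypersurface, handling the grazing set $n(x)\cdot\theta=0$ (a $d\xi$-null set on which $\tau_-$ blows up), and verifying that the projected shadows on $\partial\Omega$ and on $\partial B_R$ correspond correctly. Once the measure-preservation identity is established, the change-of-variables formula gives the $L^p$-isometry identity for $1\le p<\infty$, and the case $p=\infty$ follows from the fact that a measurable bijection preserving a measure also preserves essential suprema. Continuity of $\T^{-1}$ and $\tilde\T^{-1}$ is then automatic, so both $\T$ and $\tilde\T$ are isometric isomorphisms for every $1\le p\le\infty$.
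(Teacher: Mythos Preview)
Your proposal is correct and follows essentially the same approach as the paper: both establish the $L^p$-isometry via the change of variables $x\leftrightarrow x\mp\tau_\mp(x,\theta)\theta$ for fixed $\theta$. The paper simply asserts that this substitution yields the integral identities, whereas you supply the underlying justification---via orthogonal projection onto $\theta^\perp$ and the area formula---for why the weight $|n(x)\cdot\theta|$ in $d\xi$ exactly compensates for the change of hypersurface.
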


\begin{proof}
From the definitions \eqref{transportation_def_1} and
\eqref{transportation_def_2} we have
\begin{align}
\text{ess sup}_{\tilde\Gamma^R_-}f=\text{ess sup}_{\Gamma_-}[\T
f]\quad \mbox{and}\quad\text{ess
sup}_{\tilde\Gamma^R_-}[\tilde{\T}f]=\text{ess sup}_{\Gamma_-}f.
\end{align}This proves the isometry for $p=\infty$. For $1\leq p<\infty$ we have the identities
\begin{align}
\int_{\tilde{\Gamma}^R_-}|f(x,\theta)|^pd\tilde\xi(x,\theta)=\int_{\Gamma_-}|[\T f](x',\theta)|^pd\xi(x',\theta),\\
\int_{\tilde{\Gamma}^R_+}|\tilde{\T}f(x,\theta)|^pd\tilde\xi(x,\theta)=\int_{\Gamma_+}|f(x',\theta)|^pd\xi(x',\theta)
\end{align}
obtained by the change of variables $x=x'-\tau_-(x',\theta)$ and
$x=x'+\tau_+(x',\theta)$ respectively.
\end{proof}
\begin{proposition}
Let $(a,k)$ be an admissible pair for the transport in $\Omega$
such that the forward problem is well-posed in $L^p(\Omega\times
S^{n-1})$, $1\leq p\leq \infty$, $n\geq 2$, and let $\A$ be the
corresponding albedo operator. Extend the coefficients by zero in
$B_R\setminus\overline\Omega$. Then the forward problem in
$L^p(B_R\times S^{n-1})$, $1\leq p\leq \infty$, is also well posed
and let $\A^R$ be the corresponding albedo operator. For any
$f\in L^p(\Gamma^R_-)$ we have
\begin{equation}\label{albedoes_connection}
{\A}^R[ f](x,\theta)= \left\{
\begin{array}{ll}
f(x-\tau_-(x,\theta)\theta,\theta),&{if}~(x,\theta)\in\Gamma^R_+\setminus \tilde{\Gamma}^R_+,\\
\tilde{\T}{\A} \T f(x,\theta),&{if}~(x,\theta)\in\tilde{\Gamma}^R_+.
\end{array}
\right.
\end{equation}
\end{proposition}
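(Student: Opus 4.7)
The plan is to construct the unique forward solution $u^R$ of the $B_R$-transport problem by explicit concatenation along characteristics and then read off its trace on $\Gamma^R_+$. Since $a$ and $k$ vanish on $B_R\setminus\overline\Omega$, the transport equation there reduces to the free-streaming equation $\theta\cdot\nabla u=0$, so any solution is constant along each ray $s\mapsto x+s\theta$ in that region. Inside $\Omega$, the solution must satisfy the full transport equation with a prescribed incoming trace on $\Gamma_-$. The natural construction is therefore: free-stream $f$ inward from $\partial B_R$ up to $\partial\Omega$, solve inside $\Omega$ with $\A$, and free-stream the outgoing values from $\partial\Omega$ back out to $\partial B_R$.

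Concretely, given $f\in L^p(\Gamma^R_-)$, I first restrict $f$ to $\tilde\Gamma^R_-$ and observe that $\T f\in L^p(\Gamma_-)$ assigns to $(y,\theta)\in\Gamma_-$ the value $f(y-\tau_-(y,\theta)\theta,\theta)$, which is precisely the value delivered at the entry point to $\Omega$ by free streaming from $\partial B_R$. Applying the $\Omega$-albedo operator produces outgoing values $\A[\T f]\in L^p(\Gamma_+)$. Finally, $\tilde{\T}$ transports these values along characteristics out to $\partial B_R$: $\tilde{\T}\A\T f$ is a function on $\tilde\Gamma^R_+$ whose value at the exit point $x+\tau_+(x,\theta)\theta\in\partial B_R$ equals $\A[\T f](x,\theta)$. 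For the complementary points $(x,\theta)\in\Gamma^R_+\setminus\tilde\Gamma^R_+$, the backward ray from $x$ in direction $-\theta$ misses $\Omega$ entirely, so pure free streaming through $B_R$ gives $f(x-\tau_-(x,\theta)\theta,\theta)$.

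With these three pieces in hand, I define $u^R$ on $B_R\times S^{n-1}$ by: free-stream $f$ on the incoming leg before the ray meets $\partial\Omega$, take the $\A$-solution on $\Omega\times S^{n-1}$ with incoming data $\T f$, and free-stream on the outgoing leg past $\partial\Omega$. The verification that $u^R$ is a (mild) solution of the transport equation in $B_R$ with trace $f$ on $\Gamma^R_-$ reduces to checking matching across $\partial\Omega\times S^{n-1}$: on $\Gamma_-$ the outside free-streamed value equals $\T f$, which is exactly the incoming data fed to $\A$; on $\Gamma_+$ the outgoing $\A$-value is continued along rays by $\tilde{\T}$. By the well-posedness assumption in $B_R$, $u^R$ is \emph{the} forward solution, so $\A^R[f]=u^R|_{\Gamma^R_+}$, which yields \eqref{albedoes_connection} on reading off the two cases.

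The only real technical point is verifying that the piecewise construction lies in the correct $L^p$ solution class and has the asserted traces. This is routine in the mild/integral formulation of the transport equation over convex domains: free streaming preserves $L^p$ regularity along characteristics by the same change-of-variables identities used in the previous proposition, and the interface matching across $\partial\Omega$ is built into the definitions of $\T$ and $\tilde{\T}$, so no genuine compatibility obstruction arises. The key conceptual step is simply recognizing that $\T$ implements "free streaming from $\partial B_R$ to $\partial\Omega$" on incoming boundary data and $\tilde{\T}$ implements "free streaming from $\partial\Omega$ to $\partial B_R$" on outgoing boundary data, so that $\tilde{\T}\A\T$ is exactly the composition encoded by the transport equation on $B_R$ when the medium is trivial outside $\Omega$.
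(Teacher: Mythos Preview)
Your approach is correct and matches the paper's (one-sentence) proof, which simply observes that solutions are constant along rays in $B_R\setminus\overline\Omega$ because the coefficients vanish there; you have just spelled out the resulting concatenation in considerably more detail than the authors do.

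One minor slip worth fixing: you write ``By the well-posedness assumption in $B_R$, $u^R$ is \emph{the} forward solution,'' but well-posedness in $B_R$ is part of the conclusion of the proposition, not a hypothesis. Your explicit construction already gives existence; for uniqueness you should argue directly: any solution of the $B_R$-problem must be constant along rays in $B_R\setminus\overline\Omega$ (since $\theta\cdot\nabla u=0$ there), so its restriction to $\Omega\times S^{n-1}$ solves the $\Omega$-problem with incoming data $\T f$, and the assumed uniqueness in $\Omega$ then forces uniqueness in $B_R$. With that adjustment the argument is complete.
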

\begin{proof}
The relation \eqref{albedoes_connection} is a direct consequence of
the fact that in $B_R\setminus\overline\Omega$, where the
coefficients vanish, the solution $u(x,\theta)$ of the transport
equation is constant in $x$ along the lines in the direction of
$\theta$.
\end{proof}

\begin{proposition}
Let $(a,k)$ and $(\ta,\tk)$ be admissible pairs for the transport in
$\Omega$ such that the forward problem is well-posed in
$L^p(\Omega\times S^{n-1})$, $n\geq 2$, $1\leq p\leq \infty$, and
let $\A$ and $\tilde{\A}$ be the corresponding albedo
operators. Extend the coefficients by zero in
$B_R\setminus\overline\Omega$ and let $\A^R$, respectively ${\tA}^R$,
be their corresponding albedo operators for the transport through
$B_R$. Then \eqref{isometry} and \eqref{isometry*} hold.
\end{proposition}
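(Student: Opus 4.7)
The plan is to combine the explicit relation \eqref{albedoes_connection} between $\A^R,\tA^R$ and $\A,\tA$ with the isometric character of $\T$ and $\tilde{\T}$ established in the two preceding propositions. Applying \eqref{albedoes_connection} to both $\A^R$ and $\tA^R$, the first branch of that formula is purely geometric (free streaming through $B_R\setminus\overline{\Omega}$) and depends only on the common geometry, not on the coefficients; hence it is identical for the two operators and cancels in the difference. Consequently
\begin{equation*}
(\A^R-\tA^R)f(x,\theta)=\begin{cases}0,& (x,\theta)\in\Gamma^R_+\setminus\tilde{\Gamma}^R_+,\\ \tilde{\T}(\A-\tA)\T\bigl(f|_{\tilde{\Gamma}^R_-}\bigr)(x,\theta),& (x,\theta)\in\tilde{\Gamma}^R_+.\end{cases}
\end{equation*}

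For \eqref{isometry}, the upper bound $\|\A^R-\tA^R\|\leq\|\A-\tA\|$ follows by applying in succession the $L^p$ isometry of $\tilde{\T}$, the operator bound for $\A-\tA$, the $L^p$ isometry of $\T$, and the trivial contraction $\|f|_{\tilde{\Gamma}^R_-}\|_{L^p(\tilde{\Gamma}^R_-)}\leq\|f\|_{L^p(\Gamma^R_-)}$. The matching lower bound is obtained by choosing, for arbitrary $g\in L^p(\Gamma_-)$, the test function $f:=\T^{-1}g$ on $\tilde{\Gamma}^R_-$ extended by zero to $\Gamma^R_-$. The two isometries then give $\|(\A^R-\tA^R)f\|_{L^p(\Gamma^R_+)}=\|(\A-\tA)g\|_{L^p(\Gamma_+)}$ and $\|f\|_{L^p(\Gamma^R_-)}=\|g\|_{L^p(\Gamma_-)}$, so taking a supremum over $g$ yields $\|\A^R-\tA^R\|\geq\|\A-\tA\|$.

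For the $*$-norm identity \eqref{isometry*}, I would analyze how the singular decomposition \eqref{2d_singular_decomposition} of the Schwartz kernel transforms under the extension. Because the extended coefficients vanish on $B_R\setminus\overline{\Omega}$, any characteristic that misses $\Omega$ carries no absorption or scattering, so $A^R=\tilde{A}^R=1$ and $\beta^R=\tilde{\beta}^R=0$ on $\Gamma^R_-\setminus\tilde{\Gamma}^R_-$ (and on the corresponding outgoing complement); both differences therefore vanish identically there. On the complementary ``projected'' regions, free transport outside $\Omega$ merely translates source and detector along characteristics without changing intensity or direction, yielding the pointwise identifications $A^R(x',\theta')=A(y^-,\theta')$ and $\beta^R(x,\theta,x',\theta')=\beta(y^+,\theta,y^-,\theta')$, where $y^\mp\in\partial\Omega$ are the entry/exit points corresponding to $(x',\theta')\in\tilde{\Gamma}^R_-$ and $(x,\theta)\in\tilde{\Gamma}^R_+$. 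Since the angular variables $\theta,\theta'$ are preserved, the weight $|\theta\times\theta'|$ is unchanged, and taking essential suprema of $A^R-\tilde{A}^R$ and $|\theta\times\theta'|(\beta^R-\tilde{\beta}^R)$ gives \eqref{isometry*}.

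The main obstacle lies in the $*$-norm part, specifically in making the pointwise identification of $\beta^R$ with $\beta$ rigorous. One must unfold the collision (Neumann) expansion of the transport solution in $B_R$ and verify that each free-streaming segment outside $\Omega$ contributes only a translation along a characteristic with trivial Jacobian. This is routine bookkeeping, but it is the only step where the argument goes beyond a formal manipulation of \eqref{albedoes_connection}.
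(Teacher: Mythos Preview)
Your proof of the $L^p$ isometry \eqref{isometry} is essentially identical to the paper's: the same difference formula, the same chain of isometries for the upper bound, and the same extension-by-zero trick (equivalently, restriction to $\tilde{\Gamma}^R_-$) for the lower bound.

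For the $*$-norm identity \eqref{isometry*}, your strategy is correct but the paper circumvents precisely the obstacle you flag. Rather than computing $\beta^R$ directly from the collision expansion, the paper first establishes the transformation law for the \emph{full} Schwartz kernel difference $\alpha^R-\tilde{\alpha}^R$ in terms of $\alpha-\tilde{\alpha}$; this is an immediate distributional consequence of the relation $(\A^R-\tA^R)f=\tilde{\T}(\A-\tA)\T f$ on $\tilde{\Gamma}^R_+$, obtained by pairing both sides against test functions and changing variables under $\T,\tilde{\T}$. Separately, the transformation of $A^R-\tilde{A}^R$ is read off directly from the explicit formula \eqref{strentgh_of_singularity} using $a=\tilde{a}=0$ on $B_R\setminus\Omega$. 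Subtracting the singular $\delta\otimes\delta$ part from the full kernel identity then yields the transformation law for $\beta^R-\tilde{\beta}^R$ without ever unfolding the Neumann series. Your direct route via the collision expansion would also work, but the paper's subtraction argument is shorter and avoids the bookkeeping you anticipate.
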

\begin{proof}Following \eqref{albedoes_connection}
\begin{equation}\label{albedo_difference}
(\A^R-\tA^R)[f](x,\theta)= \left\{
\begin{array}{ll}
0,&{if}~(x,\theta)\in\Gamma^R_+\setminus \tilde{\Gamma}^R_+,\\
\tilde{\T}[\A-\tA] \T
f(x,\theta),&{if}~(x,\theta)\in\tilde{\Gamma}^R_+.
\end{array}
\right.
\end{equation}For $f\in L^p(\Gamma^R_-)$,
\begin{align*}
\|(\A^R-\tA^R)f\|_{L^p(\Gamma^R_+)}&=\|\tilde{\T}(\A-\tA)\T
f\|_{L^p(\tilde{\Gamma}^R_+)}=
\|(\A-\tA)\T f\|_{L^p({\Gamma}_+)}\\
&\leq\|\A-\tA\|\cdot\|\T f\|_{L^p(\Gamma_-)}=\|\A-\tA\|\cdot\|f\|_{L^p(\tilde{\Gamma}^R_-)}\\
&\leq\|\A-\tA\|\cdot\|f\|_{L^p({\Gamma}^R_-)}.
\end{align*}Hence $\|\A^R-\tA^R\|\leq\|\A-\tA\|$.

To prove the converse inequality, let $f_0$ be the projection of $f$
on $\tilde{\Gamma}^R_+$, i.e.
\begin{equation*}
f_0:= \left\{
\begin{array}{ll}
0,~~on~~\Gamma^R_+\setminus \tilde{\Gamma}^R_+,\\
f,~~on~~\tilde{\Gamma}^R_+.
\end{array}
\right.
\end{equation*}
Then
\begin{align*}
\|(\A-\tA)\T f\|_{L^p({\Gamma}_+)}&=\|\tilde{\T}(\A-\tA)\T
f\|_{L^p(\tilde{\Gamma}^R_+)}=
\|(\A^R-\tA^R)f\|_{L^p(\Gamma^R_+)}\\
&=\|(\A^R-\tA^R)f_0\|_{L^p(\Gamma^R_+)}\leq\|\A^R-\tA^R\|\cdot\|f_0\|_{L^p({\Gamma}^R_-)}\\
&=\|\A^R-\tA^R\|\cdot\|f|_{\tilde{\Gamma}^R_-}\|_{L^p(\tilde{\Gamma}^R_-)}
=\|\A^R-\tA^R\|\cdot\|\T f\|_{L^p({\Gamma}_-)}.
\end{align*}Since $\T$ is onto ${L^p({\Gamma}_-)}$, the inequality
above yields $\|\A-\tA\|\leq\|\A^R-\tA^R\|$.

Next, we prove the isometry in the $\|\cdot\|_*$-norm. Let
$\alpha,\talpha, \alpha^R,\talpha^R$ be the Schwartz kernels
associated with the albedo operators $\A,\tA,\A^R,\tA^R$, and let
$A, \tiA,A^R,\tiA^R$ and $\beta,\tbeta,\beta^R,\tbeta^R$
be the coefficients from the  corresponding singular decomposition
as in \eqref{2d_singular_decomposition}.

On the one hand, for $f\in C^\infty(\Gamma^R_-)$ arbitrary, and
$(x,\theta)\in\Gamma_+$ we have:
\begin{align*}
[\tilde\T(\tA-\A)\T
f]&(x+\tau_+(x,\theta)\theta,\theta)=[(\tA-\A)\T f](x,\theta)\\
&=\int_{\Gamma_-}[\talpha-\alpha](x,\theta,x',\theta')\T
f(x',\theta')d\xi(x',\theta')\\
&=\int_{\Gamma_-}[\talpha-\alpha](x,\theta,x',\theta')
f(x'-\tau_-(x',\theta'),\theta')d\xi(x',\theta')\\
&=\int_{\tilde{\Gamma}^R_-}[\tgamma-\gamma](x,\theta,x_R',\theta')
f(x'_R,\theta')d^R\xi(x'_R,\theta').
\end{align*}
In the last equality above we change variables
$x'_R=x'-\tau_-(x',\theta')\theta'$, and denote the distribution
$[\tgamma-\gamma](x,\theta,x'-\tau_-(x',\theta')\theta',\theta'):=[\talpha-\alpha](x,\theta,x',\theta')$.

On the other hand, from \eqref{albedo_difference}, we get
\begin{align*}
[\tilde\T(\tA-\A)\T
f]&(x+\tau_+(x,\theta)\theta,\theta)=[\tA^R-\A^R]f(x+\tau_+(x,\theta)\theta,\theta)\\
&=\int_{{\Gamma}^R_-}[\talpha^R
-\alpha^R](x+\tau_+(x,\theta)\theta,\theta,x'_R,\theta')f(x'_R,\theta')d\xi^R(x^R,\theta')\\
&=\int_{\tilde{\Gamma}^R_-}[\talpha^R-\alpha^R](x+\tau_+(x,\theta)\theta,\theta,x'_R,\theta')
f(x'_R,\theta')d\xi^R (x^R,\theta').
\end{align*}
Therefore, in the sense of distributions
\begin{align}\label{eq*1}
[\talpha^R
-\alpha^R](x+\tau_+(x,\theta)\theta,\theta,x'-\tau_-(x',\theta')\theta',\theta')=[\talpha
-\alpha](x,\theta,x',\theta')
\end{align}
Independently of the equality \eqref{eq*1} above, due to the fact
that $a=\ta=0$ in the $B_R\setminus\Omega$, by direct verification
in the formula \eqref{strentgh_of_singularity}, we get
\begin{align}\label{eq*2}
[\tiA^R-A^R](x'-\tau_-(x',\theta')\theta',\theta')=[\tiA-A](x',\theta'),\quad(x',\theta')\in\Gamma_-.
\end{align}
Now \eqref{eq*1} and \eqref{eq*2} applied to
\eqref{2d_singular_decomposition} yield for
$(x,\theta,x',\theta')\in\Gamma_+\times\Gamma_-$:
\begin{align}\label{eq*3}
[\tbeta^R-\beta^R](x+\tau_+(x,\theta)\theta,\theta,x'-\tau_-(x',\theta')\theta',\theta')
=[\tbeta-\beta](x,\theta,x',\theta').
\end{align}
The isometric relation \eqref{isometry*} now follows from
\eqref{eq*1} and \eqref{eq*3}.
\end{proof}

\section{The forward problem in three or higher dimensions}\label{preliminaries_3D}
In this section we recall the properties which define the albedo
operator and its kernel's singular expansion when $n\geq 3$.

Let ${T}$ be the operator defined by the left hand side of
\eqref{transport_eq} considered on $B_R\times S^{n-1}$, $n\geq 3$.
From \eqref{continuous_coeff}, the second and the third terms of
$T$ are bounded operators on $L^1(B_R\times S^{n-1})$, while the
first term is unbounded. We view ${T}$ as a (closed)  unbounded
operator on $L^1(B_R\times S^{n-1})$ with the domain
\[
D({T})= \{u\in L^1(B_R\times S^{n-1});\; \theta\cdot\nabla u\in
L^1(B_R\times S^{n-1}),\; u|_{\Gamma_-}=0\};
\]
see \cite{choulliStefanov99}.


We work under either one of the following {\em subcritical}
conditions that yield well-posedness for the boundary value
problem \eqref{transport_eq} and \eqref{bd_condition_in}:
\begin{equation}\label{critic_CS}
\text{ess sup}_{(x,\theta)\in\Omega\times S^{n-1}}
\left|\tau(x,\theta)\int_{S^{n-1}}k(x,\theta,\theta') d\theta'
\right|<1,
\end{equation}or
\begin{equation}\label{critic_DL}
a(x,\theta)-\int_{S^{n-1}}k(x,\theta,\theta') d\theta'\geq
0,~~a.e.~~\Omega\times S^{n-1};
\end{equation} see, e.g.,
\cite{balJollivet08,choulliStefanov99, dlen6, mokhtar, RS}.

Let $\delta_{\{x\}}(x')$ represent the delta distribution with
respect to the boundary measure $d\mu(x')$ supported at
$x\in\partial\Omega$, and let $\delta_{\{\theta\}}(\theta')$ represent
the delta distribution with respect to $d\theta$ centered at
$\theta\in S^{n-1}$. The following result is a recast of
\cite[Theorem 2.3]{choulliStefanov99} to the unit speed
velocities.

\begin{proposition}\label{singular_decomposition} Let $n\geq 3$.
Assume that the direct problem is well-posed. Then the albedo
operator $\A:L^1(\Gamma^R_-,d\xi)\to L^1(\Gamma^R_+,d\xi)$ is
bounded and its Schwartz kernel $\alpha(x,\theta,x',\theta')$,
considered as a distribution on $\Gamma_+$ with
$(x',\theta')\in\Gamma^R_-$ parameters, is given by
$\alpha=\alpha_1+\alpha_2+\alpha_3$, where
\begin{align}
\alpha_1(x,\theta,x',\theta')=&\frac{|n(x')\cdot\theta'|}{n(x)\cdot\theta}e^{-\int_0^{\tau_-(x,\theta)}a(x-t\theta,\theta)dt}
\delta_{\{x'+\tau_+(x',\theta')\theta'\}}(x)\delta_{\{\theta'\}}(\theta)\label{alpha_1}\\
\alpha_2(x,\theta,x',\theta')=&\frac{|n(x')\cdot\theta'|}{n(x)\cdot\theta}\int_0^{\tau_+(x',\theta')}
e^{-\int_0^{\tau_+(x'+t\theta',\theta)}a(x-s\theta,\theta)ds}
e^{-\int_0^t a(x'+s\theta',\theta')ds}\label{alpha_2}\\
&\times
k(x'+t\theta',\theta',\theta)\delta_{\{x'+t\theta'+\tau_+(x'+t\theta',\theta)\theta\}}(x)dt\nonumber\\
|n(x')\cdot \theta'|^{-1}\alpha_3\in&
L^\infty(\Gamma^R_-;L^1(\Gamma^R_+,d\xi)).\label{alpha_3}
\end{align}
\end{proposition}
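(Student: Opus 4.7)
The plan is to obtain the decomposition via the standard collision (Neumann-series) expansion of the solution of the boundary value problem, as in \cite{choulliStefanov99}. Write the transport operator as $T = T_0 - K$, where $T_0 u := -\theta\cdot\nabla u - a u$ and $K u := \int_{S^{n-1}} k(x,\theta',\theta)u(x,\theta')\,d\theta'$. Under either \eqref{critic_CS} or \eqref{critic_DL}, the bounded operator $T_0^{-1}K$ has spectral radius strictly less than $1$ on $L^1(B_R\times S^{n-1})$, so the lift $u = J f_- + \sum_{j\ge 1} (T_0^{-1}K)^j J f_-$ of the incoming data $f_-$ (where $J$ is the ballistic lift) converges and solves the boundary value problem. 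Writing $u = \sum_{j\ge 0} u_j$, the kernel $\alpha$ of $\A$ decomposes correspondingly as a sum of traces.

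I would first compute $u_0$ explicitly: $T_0 u_0=0$, $u_0|_{\Gamma_-^R}=f_-$ gives
\[
u_0(x,\theta) = \exp\!\Bigl(-\!\int_0^{\tau_-(x,\theta)}\! a(x-s\theta,\theta)\,ds\Bigr)\, f_-(x-\tau_-(x,\theta)\theta,\theta).
\]
Restricting to $\Gamma_+^R$ and writing the trace as an integral against $f_-\,d\xi$ on $\Gamma_-^R$ produces a delta concentrated on the line $x = x'+\tau_+(x',\theta')\theta'$ with $\theta=\theta'$; the Jacobian $|n(x')\cdot\theta'|/(n(x)\cdot\theta)$ appears from passing between the measure $d\xi$ on $\Gamma_-^R$ and the Dirac masses, yielding $\alpha_1$. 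Next, $u_1 = T_0^{-1}(K u_0)$ is computed by integrating along characteristics: at each point $y=x'+t\theta'$ on the incoming ray the source $K u_0(y,\theta)$ is evaluated, and the resulting $u_1(x,\theta)$ is attenuated along the outgoing ray from $y$ in direction $\theta$. Parametrizing the exit point as $x = y + \tau_+(y,\theta)\theta$ and reading off the contribution to $\alpha$ gives $\alpha_2$ in \eqref{alpha_2}: a Dirac in $x$ (because once the scattering site $y$ and the outgoing direction $\theta$ are fixed, the exit point is determined) but no Dirac in $\theta$ (because $\theta$ has been integrated against $k$).

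For $\alpha_3$, which is the sum of the traces of $u_j$ for $j\ge 2$, I would invoke the smoothing lemma of \cite{choulliStefanov99}: in dimension $n\ge 3$, the Schwartz kernel of $(T_0^{-1}K)^2$ evaluated at the boundary has, after dividing by $|n(x')\cdot\theta'|$, entries in $L^\infty(\Gamma_-^R; L^1(\Gamma_+^R,d\xi))$. The mechanism is that each application of $T_0^{-1}K$ replaces a spatial Dirac by a spatial integration along a ray and a directional integration against $k$; after two such operations, sufficiently many parameters are integrated out in dimension $n\ge 3$ to turn the distribution into an integrable function of the outgoing variables, uniformly in the incoming ones. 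Summing the geometric series in operator norm (which converges by the subcriticality assumption) produces \eqref{alpha_3}.

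The only step requiring real work is the $L^\infty$--$L^1$ bound for $\alpha_3$: controlling the iterated kernel demands a careful change-of-variables analysis along double scattering trajectories, and it is precisely here that the restriction $n\ge 3$ enters. The ballistic and single-scattering pieces are routine direct computations along characteristics, and the measure-theoretic identification of the Dirac factors in $\alpha_1,\alpha_2$ is bookkeeping with the Jacobians $|n(x)\cdot\theta|$.
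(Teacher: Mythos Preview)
Your sketch is correct and follows precisely the approach of \cite{choulliStefanov99}, which is all the paper does here: the proposition is stated without proof as ``a recast of \cite[Theorem~2.3]{choulliStefanov99} to the unit speed velocities.'' There is nothing to compare, since the paper simply cites the result; your outline of the collision expansion, the explicit computation of the ballistic and single-scattering terms, and the smoothing estimate for the twice-scattered remainder is exactly the argument in that reference.
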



\section{Preliminary estimates}\label{Preliminary Estimates}
This section extends a result from \cite{balJollivet08} for
continuous coefficients to the class of essentially bounded
coefficients as in \eqref{continuous_coeff}. This extension is
crucial to the transportation of the problem to a larger domain, if
no boundary knowledge of the coefficients is available, and has the added
benefit of simplifying the proof considerably.

Although the presentation below concerns the unit speed velocity
and measurements at the boundary, the results easily extend to the
variable velocity and measurements in the rotating planes setting.
The main novelty in this section is made possible by the following
result on the approximation of the identity.

\begin{proposition}\cite[Theorem 8.15]{folland}\label{app_id_1}
Suppose $\varphi\in L^1(\mathbb{R}^n)$ with $\int\varphi(x)dx=1$ and
$|\varphi(x)|\leq C(1+|x|)^{-n-\eps}$ for some $C,\eps>0$, and let
$\varphi_t=t^{-n}\varphi(x/t)$. If $f\in L^p(\mathbb{R}^n)$, for
some $1\leq p\leq \infty$, then $f*\varphi_t\to f(x)$ with $t\to
0^+$, for almost every $x$ in the Lebesgue set of $f$-- in
particular, for almost every $x\in \mathbb{R}^n$, and at every $x$
at which $f$ is continuous.
\end{proposition}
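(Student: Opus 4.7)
The plan is to prove this classical approximation-of-the-identity statement by the familiar three-step scheme: dominate $\sup_{t>0}|(f*\varphi_t)(x)|$ by a constant multiple of the Hardy--Littlewood maximal function $Mf(x)$; verify the conclusion on the dense subclass $C_c(\mathbb{R}^n)$; and bootstrap by an $\eps/3$ argument using the weak-type $(1,1)$ or strong-type $(p,p)$ estimate for $M$.

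For the maximal bound, first construct the radially decreasing majorant $\psi(x):=\sup_{|y|\geq|x|}|\varphi(y)|$. The hypothesis $|\varphi(x)|\leq C(1+|x|)^{-n-\eps}$ forces $\psi(x)\leq C(1+|x|)^{-n-\eps}$, so $\psi\in L^1(\mathbb{R}^n)$ and is radial and nonincreasing. Approximating $\psi$ from below by finite nonnegative linear combinations of indicator functions $\mathbf{1}_{B_r}$ of balls centered at the origin and using the elementary bound $(\mathbf{1}_{B_r})_t*|f|(x)\leq |B_r|\cdot Mf(x)$, the monotone convergence theorem gives
$$|(f*\varphi_t)(x)|\leq(\psi_t*|f|)(x)\leq\|\psi\|_{L^1}\,Mf(x),\qquad t>0,\ x\in\mathbb{R}^n.$$

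Convergence for $g\in C_c(\mathbb{R}^n)$ is immediate from $\int\varphi=1$: the change of variables $y\mapsto ty$ gives $g*\varphi_t(x)-g(x)=\int(g(x-ty)-g(x))\varphi(y)\,dy$, and splitting this integral into the regions $|y|\leq R$ and $|y|>R$ and invoking uniform continuity of $g$ together with $\varphi\in L^1$ yields uniform convergence. Then for general $f\in L^p$ with $1\leq p<\infty$ and any $\eta>0$, pick $g\in C_c$ with $\|f-g\|_p<\eta$; the triangle inequality and the maximal bound give
$$\limsup_{t\to 0^+}|(f*\varphi_t)(x)-f(x)|\leq\|\psi\|_{L^1}\,M(f-g)(x)+|f(x)-g(x)|,$$
and the $(p,p)$ (or weak $(1,1)$) inequality for $M$ shows that the measure of $\{x:\text{LHS}>\lambda\}$ is controlled by a multiple of $\eta^p/\lambda^p$, hence vanishes as $\eta\to 0$, giving the a.e.\ convergence. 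The case $p=\infty$ is handled by restricting to a large ball, where $f\in L^1_{\mathrm{loc}}$ and the $L^1$ argument applies.

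To refine \emph{almost every} $x$ to \emph{every} point in the Lebesgue set of $f$, one argues directly at a fixed Lebesgue point $x_0$ by decomposing $\int(f(x_0-y)-f(x_0))\varphi_t(y)\,dy$ over the dyadic shells $\{2^{k-1}t\leq|y|<2^k t\}$ for $k\geq 0$. On the $k$-th shell the decay assumption gives $|\varphi_t(y)|\leq Ct^{-n}2^{-k(n+\eps)}$, and combining this with the Lebesgue point property $|B_{2^kt}|^{-1}\int_{B_{2^kt}(x_0)}|f-f(x_0)|\to 0$ as $t\to 0$ on the inner shells, and with an $L^p$ tail bound on the outer shells, yields a per-shell estimate of the form $C\,2^{-k\eps}\,\omega_k(t)$ with $\sum_k 2^{-k\eps}\omega_k(t)\to 0$ as $t\to 0$. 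I expect this final refinement to be the main obstacle, since it is precisely here that the polynomial decay exponent $n+\eps$ is used essentially to make the dyadic series absolutely summable while still squeezing out smallness from the Lebesgue point modulus.
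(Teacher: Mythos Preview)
The paper does not supply its own proof of this proposition; it is quoted verbatim as \cite[Theorem 8.15]{folland} and used as a black box (via Corollary~\ref{app_id_cor}). Your argument is essentially the standard textbook proof found in that reference---radial decreasing majorant, maximal function domination, density of $C_c$, and the dyadic-shell estimate at Lebesgue points---and is correct.

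One minor remark: steps 1--4 of your outline (the $\eps/3$ maximal-function argument) already give a.e.\ convergence, but since for $f\in L^p$ (any $1\le p\le\infty$, as $L^\infty\subset L^1_{\mathrm{loc}}$) almost every point is a Lebesgue point, your step 5 alone would suffice for the full statement; the earlier steps are thus somewhat redundant, though pedagogically natural. Your identification of the dyadic-shell summability as the place where the decay exponent $n+\eps$ is genuinely needed is exactly right.
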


\begin{corollary} \label{app_id_cor}
There is a family of maps $\phi_{\eps,x_0',\theta_0'}\in
L^1(\Gamma_-,d\xi)$, $(x_0',\xi_0')\in\Gamma_-$ and $\eps>0$, such
that, for any $f\in L^\infty(\Gamma_+,d\xi)$ given,
\begin{align}\label{app_id_limit}
\lim_{\eps\to
0}\int_{\Gamma_-}\phi_{\eps,x_0',\theta_0'}(x',\theta')f(x',\theta')d\xi(x',\theta')=f(x_0',\theta_0'),
\end{align}whenever $(x_0',\theta_0')$ is in the Lebesgue set of
$f$. In particular, \eqref{app_id_limit} holds for almost every
$(x_0',\theta_0')\in\Gamma_-$.
\end{corollary}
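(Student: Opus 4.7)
The plan is to reduce Corollary~\ref{app_id_cor} to Proposition~\ref{app_id_1} via local parametrizations of $\Gamma_-$. Given any $(x_0',\theta_0')\in\Gamma_-$, we have $\theta_0'\cdot n(x_0')<0$ strictly, so $\Gamma_-$ is a smooth $(2n-2)$-dimensional submanifold of $\partial B_R\times S^{n-1}$ near this point. I would choose a smooth diffeomorphism $\psi:U\to V$ from an open neighborhood $U\subset\Rm^{2n-2}$ of the origin onto a neighborhood $V\subset\Gamma_-$ of $(x_0',\theta_0')$, with $\psi(0)=(x_0',\theta_0')$. Under $\psi$, the measure $d\xi=|n(x)\cdot\theta|\,d\mu(x)\,d\theta$ pulls back to $J(y)\,dy$ for some smooth, strictly positive function $J$ on a (possibly shrunken) neighborhood of $0$: the weight $|n\cdot\theta|$ is continuous and nonzero at $(x_0',\theta_0')$, and the remaining factor is the Jacobian of $\psi$ viewed as a parametrization of the product manifold.

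Next, I would fix once and for all a nonnegative radial mollifier $\varphi\in C_c^\infty(\Rm^{2n-2})$ with $\int\varphi\,dy=1$ and support in the unit ball, and set
\begin{equation*}
\phi_{\eps,x_0',\theta_0'}(x,\theta):=\begin{cases}\eps^{-(2n-2)}\,\varphi\bigl(\psi^{-1}(x,\theta)/\eps\bigr)\big/J\bigl(\psi^{-1}(x,\theta)\bigr), & (x,\theta)\in V,\\ 0, & \text{otherwise,}\end{cases}
\end{equation*}
defined for $\eps$ small enough that $\psi(B(0,\eps))\subset V$. The change of variables $(x',\theta')=\psi(y)$ then yields
\begin{equation*}
\int_{\Gamma_-}\phi_{\eps,x_0',\theta_0'}(x',\theta')\,f(x',\theta')\,d\xi(x',\theta')=\int_{\Rm^{2n-2}}\varphi_\eps(y)\,(f\circ\psi)(y)\,dy,
\end{equation*}
where $\varphi_\eps(y):=\eps^{-(2n-2)}\varphi(y/\eps)$ and $f\circ\psi$ is extended by zero outside $U$. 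Since $f\in L^\infty$ on $\Gamma_-$ and $J$ is bounded below on a neighborhood of $0$, this extension lies in $L^\infty(\Rm^{2n-2})$ and $\phi_{\eps,x_0',\theta_0'}\in L^1(\Gamma_-,d\xi)$ with integral $1$.

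By Proposition~\ref{app_id_1} applied to $f\circ\psi$, the right-hand side converges to $(f\circ\psi)(0)=f(x_0',\theta_0')$ whenever $0$ is a Lebesgue point of $f\circ\psi$. The main technical step that remains is the correspondence of Lebesgue points: because $\psi$ is a smooth diffeomorphism and $J$ is bounded above and below on compact neighborhoods of the origin, the $\psi$-image of any small $y$-ball has $d\xi$-measure comparable to the usual Lebesgue measure of that ball, so Lebesgue differentiation transfers between $(\Rm^{2n-2},dy)$ and $(\Gamma_-,d\xi)$. Consequently, almost every $(x_0',\theta_0')\in\Gamma_-$ is a Lebesgue point of $f$ with respect to $d\xi$, which is exactly what is needed in \eqref{app_id_limit}; the grazing set where $n(x)\cdot\theta=0$ is $d\xi$-null and is excluded automatically. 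This correspondence of Lebesgue points is the only non-bookkeeping step, and it is a standard consequence of the bi-Lipschitz character of $\psi$ together with the positivity and continuity of $J$.
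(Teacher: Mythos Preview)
Your proposal is correct and follows essentially the same approach as the paper: construct $\phi_{\eps,x_0',\theta_0'}$ by pulling back a Euclidean approximate identity through a local chart on $\Gamma_-$, dividing by the Jacobian of $d\xi$ so that the change of variables reduces the integral to a Euclidean convolution, and then invoke Proposition~\ref{app_id_1}. The paper's only cosmetic differences are that it uses a product chart on $\partial\Omega\times S^{n-1}$ (separating the $x'$ and $\theta'$ variables) and a normalized indicator rather than a smooth bump; your explicit remark on the transfer of Lebesgue points under the bi-Lipschitz chart is a detail the paper leaves implicit.
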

\begin{proof}For $(x_0',\theta_0')\in \Gamma_-$ and $\eps>0$ sufficiently small, let
$(x',\theta'):U\times V\subset\mathbb{R}^{2n-2}\to
x'(U)\times\theta'(V)\subset\partial\Omega\times S^{n-1}$ be a
coordinate chart with $(x_0',\theta_0')\in x'(U)\times\theta'(U)$
such that $x_0'=x'(0)$ and $\theta_0'=\theta'(0)$. For
$(x',\theta')\in\Gamma_-$, define
\begin{align}\label{cutoff}
\phi_{\eps,x_0',\theta_0'}(x',\theta')=\frac{1}{|n(x')\cdot
\theta'|}
\left|\frac{D(u,v)}{D(x',\theta')}\right|\varphi_\eps(u(x'))\varphi_\eps(v(\theta')),
\end{align}where $\varphi(u)\equiv 1/(\omega_{n-1})$ for $|u|< 1$, $\varphi(u)\equiv 0$ for $|u|\geq 1$, and
$\varphi_\eps(u)=\eps^{-n+1}\varphi(u/\eps)$. By $\omega_{n-1}$ we
denoted the volume of the $(n-1)$-dimensional unit ball. Then, for
any $\eps>0$, $\int\varphi_\eps(u)du=1$ and
\begin{align*}
\int_{\Gamma_-}\phi_{\eps,x_0',\theta_0'}&(x',\theta')f(x',\theta')d\xi(x',\theta')=\int_{R^{2n-2}}\varphi_\eps(u)\varphi_\eps(v)f\left(x'(u),\theta'(v)\right)dudv.
\end{align*}Apply the equality above to $f\equiv 1$ to get
$\|\phi_{\eps,x_0',\theta_0'}\|_{L^1(\Gamma_+,d\xi)}=1$. The
conclusion follows from Proposition \ref{app_id_1}.
\end{proof}
We introduce the following notations. For $(x',\theta')\in\Gamma_-$,
$0\leq t\leq\tau_+(x',\theta')$ and $\theta\in S^{n-1}$ let
\begin{align}
x^{+}(t,x',\theta',\theta)&:=x'+t\theta'+\tau_+(x'+t\theta',\theta)\theta,\label{x+}\\
F(t,x',\theta',\theta)&:=e^{-\int_0^{\tau_+(x'+t\theta',\theta)}a(x'+t\theta'+s\theta,\theta)ds}e^{-\int_0^t
a(x'+s\theta',\theta')ds}\label{E},
\end{align}denote the exiting point after one scattering at the
point $x'+t\theta'$ coming from direction $\theta'$ into the
direction $\theta$, and, respectively, the total absorption along the
broken line due to this scattering. We note here that $x^+$ depends
on the point $x'+t\theta'$ where the scattering takes place and not
explicitly on $t$. This remark will be used later in
\eqref{almost_there}.

Let $(a,k),(\tilde{a},\tilde{k})$ be as in \eqref{continuous_coeff}
extended by 0 on $B_R\setminus\overline\Omega$, such that the
forward problem is well-posed. All the operators bearing the tilde
refer to $(\tilde{a},\tilde{k})$ and are defined in a similar way
with the ones for $(a,k)$. For example $\tilde{\mathcal{A}}$ is the
albedo operator corresponding to $(\tilde{a},\tilde{k})$. Recall
that $n$ is the dimension of the space.
\begin{theorem}\label{bal_lemma}Let $(a,k),(\tilde{a},\tilde{k})$ be as in
\eqref{continuous_coeff}. For almost every
$(x_0',\theta_0')\in\Gamma_-$ the following estimates hold:

For $n\geq 2$,
\begin{align}
\left|e^{-\int_0^{\tau_+(x_0',\theta_0')}a(x_0'+t\theta_0',\theta_0')ds}-
e^{-\int_0^{\tau_+(x_0',\theta_0')}\tilde{a}(x_0'+t\theta_0',\theta_0')ds}\right|
\leq\norm\mathcal{A}-\tilde{\mathcal{A}}\norm\label{bal_estimate1}.
\end{align}For $n\geq 3$,
\begin{align}
\int_0^{\tau_+(x_0',\theta_0')}\int_{S^{n-1}}|k-\tilde{k}|(x_0'+t\theta_0',\theta',\theta)E(t,x_0',\theta_0',\theta)d\theta
dt\leq\norm\mathcal{A}-\tilde{\mathcal{A}}\norm+\nonumber\\
+\|F-\tF\|_\infty\int_0^{\tau_+(x_0',\theta_0')}\int_{S^{n-1}}\tilde{k}(x_0+t\theta_0',\theta_0',\theta)d\theta
dt.\label{bal_estimate2}
\end{align}
\end{theorem}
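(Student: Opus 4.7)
The plan is to test the operator $\A-\tA$ against the approximation of identity $\phi_\eps := \phi_{\eps, x_0', \theta_0'}$ from Corollary~\ref{app_id_cor}, with $(x_0',\theta_0')$ in the common Lebesgue set of the relevant integrands. Since $\|\phi_\eps\|_{L^1(\Gamma_-, d\xi)} = 1$, one immediately has
\begin{equation*}
\|(\A-\tA)\phi_\eps\|_{L^1(\Gamma_+)} \leq \|\A-\tA\|,
\end{equation*}
and I would use the singular decomposition $\alpha = \alpha_1+\alpha_2+\alpha_3$ of Proposition~\ref{singular_decomposition} to split this output into ballistic, single-scattering, and remainder pieces. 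Estimates \eqref{bal_estimate1} and \eqref{bal_estimate2} would then be extracted by restricting this $L^1$-inequality to regions of $\Gamma_+$ chosen to isolate $(\A_1-\tA_1)\phi_\eps$ and $(\A_2-\tA_2)\phi_\eps$ respectively, and passing to the limit $\eps\to 0$.

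For \eqref{bal_estimate1}, I would first use the delta structure $\delta_{\theta'}(\theta)\delta_{x'+\tau_+\theta'}(x)$ of $\alpha_1$ to compute $(\A_1-\tA_1)\phi_\eps$ explicitly; because $B_R$ is a ball the Jacobian ratio $|n(x_0)\cdot\theta_0'|/|n(x_0')\cdot\theta_0'|$ at the two endpoints of a chord equals one, so that after integrating the two deltas,
\begin{equation*}
\|(\A_1-\tA_1)\phi_\eps\|_{L^1(\Gamma_+)} = \int_{\Gamma_-}|A-\tiA|\phi_\eps\,d\xi \longrightarrow |A-\tiA|(x_0',\theta_0')
\end{equation*}
as $\eps\to 0$ by Corollary~\ref{app_id_cor}. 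The output $(\A_1-\tA_1)\phi_\eps$ is supported in a thin tube $T_\eps\subset\Gamma_+$ of $\xi$-measure $O(\eps^{2(n-1)})$ around the ballistic exit ray. Restricting the $L^1$-inequality to $T_\eps$ and applying the reverse triangle inequality,
\begin{equation*}
\|\A-\tA\| \geq \int_{T_\eps}|(\A_1-\tA_1)\phi_\eps|\,d\xi - \int_{T_\eps}|(\A_2+\A_3-\tA_2-\tA_3)\phi_\eps|\,d\xi,
\end{equation*}
I would show that the second right-hand term tends to $0$: the $\alpha_2$-contribution satisfies $\int_{T_\eps}|(\A_2-\tA_2)\phi_\eps|\,d\xi \leq C|T_\eps|\to 0$ by the pointwise $L^\infty$-bound on $\A_2\phi_\eps$ that follows from \eqref{alpha_2} and $\|k\|_{\infty,1}\leq\rho$, while the $\alpha_3$-contribution vanishes by the equi-absolute-continuity of $\int_S|\alpha_3|\,d\xi$ as $|S|\to 0$ that is implicit in \eqref{alpha_3}, combined with $\|\phi_\eps\|_1 = 1$.

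For \eqref{bal_estimate2}, I would restrict the $L^1$-inequality instead to $T_\eps^c$, on which $(\A_1-\tA_1)\phi_\eps$ vanishes because the factor $\delta_{\theta'}(\theta)$ in $\alpha_1$ forces directions $\theta\approx\theta_0'$. Integrating out the delta $\delta_{x^+(t,\cdot,\cdot,\theta)}(x)$ in $\alpha_2$ via Fubini in $(t,x',\theta',\theta)$ and then applying Corollary~\ref{app_id_cor} to the $(x',\theta')$-integrals against $\phi_\eps$ yields
\begin{equation*}
\int_{T_\eps^c}|(\A_2-\tA_2)\phi_\eps|\,d\xi \longrightarrow \int_0^{\tau_+(x_0',\theta_0')}\!\!\int_{S^{n-1}}|kF-\tk\tF|(x_0'+t\theta_0',\theta_0',\theta)\,d\theta\,dt
\end{equation*}
in the limit $\eps\to 0$. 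The triangle inequality $|kF-\tk\tF| \geq |k-\tk|F - \tk|F-\tF|$ then rearranges to
\begin{equation*}
\int\!\!\int|k-\tk|F\,d\theta\,dt \leq \int\!\!\int|kF-\tk\tF|\,d\theta\,dt + \|F-\tF\|_\infty\int\!\!\int\tk\,d\theta\,dt,
\end{equation*}
and discarding the $(\A_3-\tA_3)\phi_\eps$ contribution on $T_\eps^c$ (again by equi-absolute-continuity) bounds the first right-hand term by $\|\A-\tA\|$, giving \eqref{bal_estimate2}.

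The main obstacle is the control of the $\alpha_3$ remainder: its bound from \eqref{alpha_3} is the weak, non-pointwise $L^\infty(\Gamma_-;L^1(\Gamma_+))$-estimate, yet one must show that its $L^1$-contribution over the shrinking tube $T_\eps$ (for \eqref{bal_estimate1}) or over $T_\eps^c$ localized near the central ray (for \eqref{bal_estimate2}) is negligible in the limit. This is accomplished by combining equi-absolute-continuity of $L^1$ functions with the concentration of $\phi_\eps$ on sets of $\xi$-measure $\eps^{2(n-1)}\to 0$. The extension beyond continuous (as treated in \cite{balJollivet08}) to merely $L^\infty$ coefficients hinges precisely on Corollary~\ref{app_id_cor}, which supplies pointwise convergence of the smoothed integrals at almost every Lebesgue point $(x_0',\theta_0')$.
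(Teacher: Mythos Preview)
Your handling of the remainder $\alpha_3$ has a genuine gap that breaks the argument, most severely for \eqref{bal_estimate2}. There you restrict to the \emph{large} set $T_\eps^c$ and propose to discard $(\A_3-\tA_3)\phi_\eps$ ``by equi-absolute-continuity,'' but that principle concerns integrals over sets of small measure, and $|T_\eps^c|$ is bounded below. In fact
\[
\int_{T_\eps^c}\big|(\A_3-\tA_3)\phi_\eps\big|\,d\xi
\le \int_{\Gamma_-}\phi_\eps(x',\theta')\,|n(x')\cdot\theta'|
\Bigl(\int_{\Gamma_+}\frac{|\alpha_3-\tilde\alpha_3|}{|n(x')\cdot\theta'|}\,d\xi\Bigr)\,d\mu\,d\theta'
= O(1),
\]
which is all that \eqref{alpha_3} yields; nothing forces this to vanish. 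The same objection applies, less fatally, to your estimate on $T_\eps$: the membership $|n\cdot\theta'|^{-1}\alpha_3\in L^\infty(\Gamma_-;L^1(\Gamma_+))$ does \emph{not} imply uniform integrability of the family $\{\alpha_3(\cdot,\cdot,x',\theta')\}_{(x',\theta')}$, and since both $T_\eps$ and the support of $\phi_\eps$ shrink simultaneously, pointwise absolute continuity at each fixed $(x',\theta')$ is insufficient. A secondary issue: for $n\ge3$ the kernel $\alpha_2$ carries a $\delta$ in $x$, so $(\A_2-\tA_2)\phi_\eps$ is a singular measure on $\Gamma_+$, not an $L^\infty$ function, and the bound ``$\le C|T_\eps|$'' is unavailable.

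The paper sidesteps all of this by a dual formulation: pair $(\A-\tA)\phi_\eps$ against a \emph{fixed} test function $\phi\in L^\infty(\Gamma_+)$, $\|\phi\|_\infty\le1$, and pass to the limit $\eps\to0$ \emph{first}. By Corollary~\ref{app_id_cor} each term $I_j(\phi,\eps)$ converges, at a.e.\ Lebesgue point $(x_0',\theta_0')$, to an explicit $I_j(\phi)(x_0',\theta_0')$; in particular $I_3(\phi)$ becomes the integral of the \emph{single} $L^1$ function $(\alpha_3-\tilde\alpha_3)(\cdot,\cdot,x_0',\theta_0')$ against $\phi$. Only \emph{after} this limit does one choose sequences of $\phi$: for \eqref{bal_estimate1}, $\phi_m$ supported in a shrinking neighborhood of the ballistic exit point, so that $I_2(\phi_m),I_3(|\phi_m|)\to0$ trivially; for \eqref{bal_estimate2}, $\phi_{m,q}=\chi_m(\theta)\varphi_q(x,\theta)$, where $\chi_m$ vanishes near $\theta_0'$ (killing $I_1$) and $\varphi_q$ is supported in a $1/q$-tube about the once-scattered exit curve $\Pi_{x_0',\theta_0',\theta}$ with $\varphi_q=\mathrm{sgn}(k-\tk)$ there, so that $I_3(|\phi_{m,q}|)\to0$ while $I_{2,1}(\phi_{m,q})$ recovers exactly $\int\!\!\int|k-\tk|F$. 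The decoupling of the two limits---first $\eps\to0$, then the support of $\phi$ shrinking---is precisely what allows the weak bound \eqref{alpha_3} to suffice; your one-shot $L^1$ restriction collapses both limits into one and loses this leverage.
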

\begin{proof}
Let $(x_0',\theta_0')\in\Gamma_-$ be arbitrarily fixed and let
$\phi_{\eps,x_0',\theta_0'}\in L^1(\Gamma_-)$ be defined as in
Corollary \ref{app_id_cor}. To simplify the formulas, since
$(x_0',\theta_0')$ is fixed, in the following we drop this
dependence from the notation $\phi_\eps=\phi_{\eps,x_0',\theta_0'}$.

Let $\mathcal{A}=\mathcal{A}_1+\mathcal{A}_2+\mathcal{A}_3$ be the
decomposition of the albedo operator given by
\[\mathcal{A}_i f(x,\theta)=\int_{\Gamma_-}\alpha_i(x,\theta,x',\theta')f(x',\theta')d\mu(x')d\theta',~~i=1,2,3,\]
where $\alpha_i$, $i=1,2,3$ are the Schwartz kernels in
Proposition \ref{singular_decomposition} and $d\mu(x')$ is the
induced Lebesgue measure on the boundary $\partial\Omega$.

Let $\phi\in L^\infty(\Gamma_+)$ with $\|\phi\|_\infty\leq 1$. Since
$\|\phi_\eps\|_{L^1(\Gamma_-)}=1$, the mapping properties of the
albedo operator imply that
\begin{align}\label{original_estimate}
\left|\int_{\Gamma_+}\phi(x,\theta)[\mathcal{A}-\tilde{\mathcal{A}}]\phi_\eps(x,\theta)d\xi(x,\theta)\right|\leq
\norm\mathcal{A}-\tilde{\mathcal{A}}\norm.
\end{align}

Next we evaluate each of the three terms in
$\int_{\Gamma_+}\phi(x,\theta)[\mathcal{A}-\tilde{\mathcal{A}}]\phi_\eps(x,\theta)d\xi(x,\theta)$
by using the decomposition in Proposition
\ref{singular_decomposition} and Fubini's theorem.

The first term is evaluated using the formula \eqref{alpha_1}:
\begin{align}
I_1(\phi,\eps):=\int_{\Gamma_+}\phi(x,\theta)[\mathcal{A}_1-\tilde{\mathcal{A}}_1]\phi_\eps(x,\theta)d\xi(x,\theta)=
\int_{\Gamma_-}\phi(x'+\tau_+(x',\theta'),\theta')\phi_\eps(x',\theta')\\
\times\left[e^{-\int_0^{\tau_+(x',\theta')}a(x'+s\theta',\theta')ds}-
e^{-\int_0^{\tau_+(x',\theta')}\tilde{a}(x'+s\theta',\theta')ds}\right]d\xi(x',\theta').\nonumber
\end{align}
Since the integrand above is in $L^\infty(\Gamma_-)$ by applying
\eqref{app_id_limit}, we get for almost every
$(x_0',\theta_0')\in\Gamma_-$
\begin{align}
I_1(\phi)(x_0',\theta_0')&:=\lim_{\eps\to
0}I_1(\phi,\eps)\nonumber\\
&=\phi(x_0'+\tau_+(x_0',\theta_0'),\theta_0')
\left(e^{-\int_0^{\tau_+(x_0',\theta_0')}a(x_0'+s\theta_0',\theta_0')ds}
-e^{-\int_0^{\tau_+(x_0',\theta_0')}\tilde{a}(x_0'+s\theta_0',\theta_0')ds}\right).\label{term1_lim}
\end{align}

 To evaluate the second term we use the notations \eqref{x+} and
 \eqref{E} and the formula \eqref{alpha_2}:
\begin{align*}
I_2(\phi,\eps)&:=\int_{\Gamma_+}\phi(x,\theta)[\mathcal{A}_2-\tilde{\mathcal{A}}_2]
\phi_\eps(x,\theta)d\xi(x,\theta)\nonumber\\
&=\int_{\Gamma_-}\phi_\eps(x',\theta')d\xi(x',\theta')
\Bigl\{\int_{S^{n-1}}\int_0^{\tau_+(x',\theta')}
\phi(x^{+}(t,x',\theta',\theta),\theta)\times\nonumber\\
&\times\left[F(t,x',\theta',x,\theta)k(x'+t\theta',\theta',\theta)-\tF(t,x',\theta',x,\theta)\tilde{k}(x'+t\theta',\theta',\theta)\right]dtd\theta\Bigr\}.
\end{align*}
Apply again \eqref{app_id_limit} for the continuous integrand
above to obtain for almost every $(x_0',\theta_0')\in\Gamma_-$:
\begin{align}
I_2(\phi)(x_0',\theta_0')&:=\lim_{\eps\to
0}I_2(\phi,\eps)\nonumber\\&=\int_{S^{n-1}}\int_0^{\tau_+(x_0',\theta_0')}
\phi(x^{+}(t,x_0',\theta_0',\theta),\theta)
\left[F(t,x_0',\theta_0',x,\theta)k(x_0'+t\theta_0',\theta_0',\theta)\right.\label{term2_lim}\\
&\left.-\tF(t,x_0',\theta_0',x,\theta)\tilde{k}(x_0'+t\theta_0',\theta_0',\theta)\right]dtd\theta,\nonumber
\end{align}or $I_2(\phi)=I_{2,1}(\phi)+I_{2,2}(\phi)$ with
\begin{align}
I_{2,1}(\phi)&=\int_{S^{n-1}}\int_0^{\tau_+(x_0',\theta_0')}
\phi(x^{+}(t,x_0',\theta_0',\theta),\theta)
F(t,x_0',\theta_0',x,\theta)(k-\tilde{k})(x_0'+t\theta_0',\theta_0',\theta)dtd\theta\label{term21_lim},\\
|I_{2,2}(\phi)|&\leq\int_{S^{n-1}}\int_0^{\tau_+(x_0',\theta_0')}
|F-\tF|(t,x_0',\theta_0',x,\theta)
\tilde{k}(x_0'+t\theta_0',\theta_0',\theta)dtd\theta\label{term22_lim}.
\end{align}

Consider the third term
\begin{align*}
I_3(\phi,\eps)&=\int_{\Gamma_+}\phi(x,\theta)[\mathcal{A}_3-\tilde{\mathcal{A}}_3]\phi_\eps(x,\theta)d\xi(x,\theta)\\
&=\int_{\Gamma_-}\phi_\eps(x',\theta')d\xi(x',\theta') \left\{
\int_{\Gamma_+}\phi(x,\theta)\frac{(\alpha_3-\tilde{\alpha}_3)(x,\theta,x',\theta')}{|n(x')\cdot\theta'|}
d\xi(x,\theta) \right\}.
\end{align*}
By \eqref{alpha_3}, the map
$(x',\theta')\mapsto\int_{\Gamma_+}\phi(x,\theta){(\alpha_3-\tilde{\alpha}_3)(x,\theta,x',\theta')}{|n(x')\cdot\theta'|}^{-1}
d\xi(x,\theta)$ is in $L^\infty(\Gamma_-)$, and then, by
\eqref{app_id_limit} we get for almost every
$(x_0',\theta_0')\in\Gamma_-$
\begin{align}
I_3(\phi)(x_0',\theta_0'):=\lim_{\eps\to 0}I_3(\phi,\eps)=
\int_{\Gamma_+}\phi(x,\theta)\frac{|\alpha_3-\tilde{\alpha}_3|(x,\theta,x_0',\theta_0')}{|n(x_0')\cdot\theta_0'|}
d\xi(x,\theta).\label{term3_lim}
\end{align}

The left hand side of \eqref{original_estimate} has three terms. We
move the third term to the right hand side (with absolute values)
and take the limit with $\eps\to 0$ to get
\begin{align}\label{original_estimate2}
\left|I_1(\phi)+I_2(\phi)\right|(x_0',\theta_0')\leq
\norm\mathcal{A}-\tilde{\mathcal{A}}\norm+
I_3(|\phi|)(x_0',\theta_0'),~a.e.~(x_0',\theta_0')\in\Gamma_-,
\end{align}for any $\phi\in L^\infty(\Gamma_+)$ with $\|\phi\|_\infty=1$.

We note that the negligible set on which the inequality above does
not hold may depend on $\phi$. In \eqref{original_estimate2}, we
shall choose two sequences of $\phi$ to conclude the estimates of
the lemma. Since countable union of negligible sets is negligible,
the inequality \eqref{original_estimate2} holds almost everywhere on
$\Gamma_-$, independently of the term in the sequence. This
justifies the argument below for almost every $(x_0',\theta_0')$ in
$\Gamma_-$.

First we show the estimate \eqref{bal_estimate1}. Let $\{\phi_m\}$
be a sequence of maps such that $|\phi_m|\le 1$, $\phi_m\equiv 1$
near $(x_0'+\tau_+(x_0',\theta_0')\theta_0',\theta_0')$ and with
support inside $\{(x,\theta)\in\Gamma_+:
~|x-x_0'+\tau_+(x_0',\theta_0')\theta_0'|+
|\theta-\theta_0'|<1/m\}$. Then \eqref{term1_lim} gives
\begin{align}
I_1(\phi_m)=e^{-\int_0^{\tau_+(x_0',\theta_0')}a(x_0'+s\theta_0',\theta_0')ds}-
e^{-\int_0^{\tau_+(x_0',\theta_0')}\tilde{a}(x_0'+s\theta_0',\theta_0')ds},
\end{align}independently of $m$. From \eqref{term2_lim} we have
$\lim_{m\to \infty}I_2(\phi_m)=0$ since the support in $\theta$
shrinks to $\theta_0'$. From \eqref{term3_lim} we also have
$\lim_{n\to \infty}I_3(|\phi_m|)=0$, since the support shrinks to
one point.

Next we prove the estimate \eqref{bal_estimate2}.
Let $\{\phi_{m,q}\}$ be a (double indexed) sequence defined by
$\phi_{m,q}(x,\theta)=\chi_m(\theta)\varphi_{q}(x,\theta)$ where
$\chi_m\in L^\infty(S^{n-1})$ with $\chi_m\equiv 0$ for
$|\theta-\theta_0'|\leq 1/m$ and $\chi_m\equiv 1$ for
$|\theta-\theta_0'|>1/m$ and $\varphi_q\in L^\infty(\Gamma_+)$ to
be specified below. Regardless of the way we define $\varphi_q$,
the presence of $\chi_m$ already yields $I_1(\phi_{m,q})=0$ for
all $m,q$ since
$\chi_{m}(x_0'+\tau_+(x_0',\theta_0')\theta_0',\theta_0')=0$.

Next we construct $\varphi_{q}$. Note that we only need to have it
defined for $\theta\neq\theta_0'$ since the function $\chi$
vanishes nearby $\theta_0'$.

Define first $\varphi_q(x,\theta)$ as follows. Using the notation
\eqref{x+}, let
\[\Pi_{x_0',\theta_0',\theta}:=\{x^+(t,x_0',\theta_0',\theta)\in\partial B_R: ~0\leq t\leq
\tau_+(x_0',\theta_0')\}\] represent the set of the exit points of
particles entering in the direction $\theta_0'$ at $x_0'$ which
scattered once in the direction of $\theta$. The set
$\Pi_{x_0',\theta_0',\theta}$ lies in the intersection of the
plane through $x_0'$ spanned by $\theta_0',\theta$ and the
boundary $\partial B_R$.

Now, for each $\theta\in S^{n-1}$ and
$x^+(t,x_0',\theta_0',\theta)$ of \eqref{x+} in the set
\begin{equation}\label{support}
\{x^+\in\partial B_R:~ \mbox{dist}_{\partial
B_R}(x^+,\Pi_{x_0',\theta_0',\theta})<1/m\},
\end{equation}
we define
\begin{align}\label{almost_there}
\varphi_q(x^+(t,x_0',\theta_0',\theta),\theta)
:=\mbox{sgn}(k-\tilde{k})(x_0'+t\theta_0',\theta_0',\theta).\end{align}
Outside the set in \eqref{support} we define
$\varphi_q(\cdot,\theta)\equiv 0$.

Since $x^+(t,x_0',\theta_0',\theta)$ in \eqref{x+} depends only on
$x_0'+t\theta_0',\theta_0'$ and $\theta$, the equation
\eqref{almost_there} gives a well defined function $\varphi_q\in
L^\infty(B_R\times S^{n-1})$. Note also that \eqref{support}
shrinks to a negligible set on $\partial B_R$ with $q\to \infty$.

Now apply the estimate \eqref{original_estimate2} to $\phi_{m,q}$
and use $I_1(\phi_{m,q})=0$ to get
\begin{align}
\left|I_{2,1}(\phi_{m,q})\right|(x_0',\theta_0')\leq
\norm\mathcal{A}-\tilde{\mathcal{A}}\norm+
I_3(|\phi_{m,q}|)(x_0',\theta_0')+|I_{2,2}(\phi_{m,q})|(x_0',\theta_0').
\end{align}Since the support of $\phi_{m,q}$ shrinks
to a set of measure zero in $\Gamma_+$ as $q\to\infty$, we get for
all $m$ and almost every $(x_0',\theta_0')\in\Gamma_-$,
$\lim_{q\to\infty}I_3(|\phi_{m,q}|)(x_0',\theta_0')=0$. Finally,
from \eqref{term21_lim} we obtain for almost every
$(x_0',\theta_0')\in\Gamma_-$ that
\begin{align*}
\lim_{m\to \infty}\lim_{n\to
\infty}I_{2,1}(\phi_{m,q})=\int_{S^{n-1}}\int_0^{\tau_+(x_0',\theta_0')}
F(t,x_0',\theta_0',x,\theta)|k-\tilde{k}|(x_0'+t\theta_0',\theta_0',\theta)dtd\theta,
\end{align*}while from \eqref{term22_lim} we have
\begin{align*}|I_{2,2}(\phi_{m,q})|\leq\int_{S^{n-1}}\int_0^{\tau_+(x_0',\theta_0')}
|F-\tF|(t,x_0',\theta_0',x,\theta)
\tilde{k}(x_0'+t\theta_0',\theta_0',\theta)dtd\theta,
\end{align*}for all $m,q$. The
estimate \eqref{bal_estimate2} in the lemma follows.
\end{proof}
\section{Stability modulo gauge transformations}
In this section we prove Theorem \ref{main_thm}.

We start with two pairs $(a,k),(\ta,\tk)\in U_{\Sigma,\rho}$ and let
$$\eps:=\norm\A-\tA\norm.$$ We shall find an intermediate pair
$(a',k')\sim (a,k)$ such that \eqref{closeness_in_a} and
\eqref{closeness_in_k} hold for some constant $C>0$ dependent on
$\Sigma,\rho$, $\Omega$ and $B_R$.

Define first the ``trial" gauge transformation:
\begin{align}\label{trial_gauge}
\varphi(x,\theta):=e^{-\int_0^{\tau_-(x,\theta)}(\ta-a)(x-s\theta,\theta)ds},~~a.e.~~(x,\theta)\in
B_R\times S^{n-1}.
\end{align}
Then $\varphi>0$,  $\varphi|_{\Gamma^R_-}=1$,
$\theta\cdot\nabla\varphi(x,\theta)\in L^\infty(B_R\times S^{n-1})$
and
\begin{align}\label{a1-a}
\ta(x,\theta)=a(x,\theta)-\theta\cdot\nabla_x\ln\varphi(x,\theta).
\end{align}Note that $\varphi|_{\Gamma^R_+}$ is close, but not equal,
to 1. We use the first estimate from Lemma~\ref{bal_lemma} to decide
how far from 1 can $\varphi|_{\Gamma^R_+}$ be.

By \eqref{bal_estimate1}, we have for almost every $(x_0',\theta_0')\in\Gamma_-$
\begin{align*}
\left|e^{-\int_0^{\tau_+(x_0',\theta_0')}a(x_0'+t\theta_0',\theta_0')ds}-
e^{-\int_0^{\tau_+(x_0',\theta_0')}\tilde{a}(x_0'+t\theta_0',\theta_0')ds}\right|
\leq\eps.
\end{align*}In each of the integrals of the left hand side above change variables
$t=\tau_+(x_0',\theta_0')-s$ and denote
$x_0=x_0'+\tau_+(x_0',\theta_0')\theta_0'$  to get
\begin{align}\label{trial_gauge_estimate1}
\left|e^{-\int_0^{\tau_-(x_0,\theta_0')}a(x_0-s\theta_0',\theta_0')ds}-
e^{-\int_0^{\tau_-(x_0,\theta_0')}\tilde{a}(x_0-s\theta_0',\theta_0')ds}\right|
\leq\eps.
\end{align}When $(x_0',\theta_0')$ covers
$\Gamma^R_-$ almost everywhere we get that $(x_0,\theta_0')$ covers $\Gamma^R_+$ almost everywhere.

Now apply the Mean Value theorem to $u\mapsto e^{-u}$ to get the
lower bound
\begin{align}\label{trial_gauge_estimate2}
\Bigl|e^{-\int_0^{\tau_-(x_0,\theta_0')}a(x_0-s\theta_0',\theta_0')ds}-&
e^{-\int_0^{\tau_-(x_0,\theta_0')}\tilde{a}(x_0-s\theta_0',\theta_0')ds}\Bigr|
\nonumber\\
&=
e^{-u_0}\left|\int_0^{\tau_-(x_0,\theta_0')}(\tilde{a}-a)(x_0-s\theta_0',\theta_0')ds\right|\nonumber\\
&= e^{-u_0}|\ln\varphi(x_0,\theta_0')|\geq
e^{-2R\Sigma}|\ln\varphi(x_0,\theta_0')|.
\end{align}
where $u_0=u_0(x_0,\theta_0',a,\tilde{a})$ is a value between the
two integrals appearing at the exponent in the left hand side
above, and $\varphi$ is defined in \eqref{trial_gauge}.

From \eqref{trial_gauge_estimate1} and \eqref{trial_gauge_estimate2}
we get the following estimate for the ``trial" gauge $\varphi$:
\begin{align}\label{trial_gauge_less_epsilon}
|\ln\varphi(x,\theta)|\leq
e^{2R\Sigma}\eps,~a.e.~(x,\theta)\in\Gamma^R_+.
\end{align}

The ``trial" gauge $\varphi$ is not good enough since it does not
equal 1 on $\Gamma^R_+$. We alter it to some $\tilde{\varphi}\in
L^\infty(B_R\times S^1)$ with
$\theta\cdot\nabla\ln\tilde{\varphi}\in L^\infty(B_R\times S^{n-1})
$ in such a way that $\tilde{\varphi}|_{\partial B_R}=1$. More
precisely, for almost every $(x,\theta)\in\overline{B}_R\times
S^{n-1}$, we define $\tilde{\varphi}(x,\theta)$ by
\begin{align}\label{true_gauge}
\ln\tilde{\varphi}(x,\theta):=\ln\varphi(x,\theta)-
\frac{\tau_-(x,\theta)}{\tau(x,\theta)}\ln\varphi(x+\tau_+(x,\theta)\theta,\theta).
\end{align}
Since $0\leq\tau_-(x,\theta)/\tau(x,\theta)\leq 1$ we get
$\tilde{\varphi}\in L^\infty(B_R\times S^1)$. Following directly
from its definition $\ln\tilde{\varphi}|_{\partial{B_R}}=0$: Indeed,
for $(x,\theta)\in\Gamma^R_-$, we get that $\tau_-(x,\theta)=0$ and
$\varphi(x,\theta)=1$, whereas for $(x,\theta)\in\Gamma^R_+$ we have
that $\tau_-(x,\theta)=\tau(x,\theta)$ and
$x=x+\tau_+(x,\theta)\theta$. Since both maps
$x\mapsto\tau(x,\theta)$ and
 $x\mapsto\ln\varphi(x+\tau_+(x,\theta)\theta,\theta)$ are constant
 in the direction of $\theta$ and since
 $\theta\cdot\nabla_x\tau_-(x,\theta)=1$, we get
 $\theta\cdot\nabla\ln\tilde{\varphi}(x,\theta)\in L^\infty(B_R\times
 S^{n-1})$ and
\begin{align}\label{closeness_varphi}
\theta\cdot\nabla\ln\tilde{\varphi}(x,\theta)=\theta\cdot\nabla\ln{\varphi}(x,\theta)-
\frac{\ln\varphi(x+\tau_+(x,\theta)\theta,\theta)}{\tau(x,\theta)}.
\end{align}

Define now the pair $(a',k')$ in the equivalence class of $\langle
a,k\rangle$ by
\begin{align}
a'(x,\theta)&:=a(x,\theta)-\theta\cdot\nabla_x\ln\tilde{\varphi}(x,\theta),\label{tildea-a}\\
k'(x,\theta',\theta)&:=\frac{\tilde{\varphi}(x,\theta)}{\tilde{\varphi}(x',\theta')}k(x,\theta',\theta).\label{tildek-k}
\end{align}
Now $\A'$, the albedo operator corresponding to $(a',k')$,
satisfies $\A'=\A$, and then
\begin{align}
\norm\A'-\tA\norm=\norm\A-\tA\norm=\eps.
\end{align}

Next we compare the pairs $(a',k')$ with $(\ta,\tk)$ and show them
to satisfy \eqref{closeness_in_a} and \eqref{closeness_in_k}.

Using the definitions \eqref{a1-a}, \eqref{tildea-a}, the relation
\eqref{closeness_varphi}, and the estimate for $\varphi$ on
$\Gamma^R_+$ \eqref{trial_gauge_less_epsilon}, we have for almost
every $(x,\theta)\in B_R\times S^{n-1}$:
\begin{align}\label{a1-tildea}
|\ta(x,\theta)-a'(x,\theta)|&= |[\ta-a](x,\theta)\nonumber
+[a-a'](x,\theta)|\\
&=|\theta\cdot\nabla_x\ln\tilde{\varphi}(x,\theta)-\theta\cdot\nabla_x\ln{\varphi}(x,\theta)|\\
&=\frac{|\ln\varphi(x+\tau_+(x,\theta)\theta,\theta)|}{\tau(x,\theta)}\leq
\eps\frac{ e^{2R\Sigma}}{\tau(x,\theta)}.\nonumber
\end{align}

Since the coefficients are supported away from $\partial B_R$ (by
construction of $B_R$) such that \eqref{nontangential} holds,
following \eqref{a1-tildea} we obtain the estimate
\eqref{closeness_in_a} in the form
\begin{align}\label{final_estimate_for_a}
\|\ta-a'\|_\infty\leq \eps\frac{ e^{2R\Sigma}}{c_R},
\end{align}with $c_R$ from \eqref{nontangential}.

Up to this point, all the arguments above also work for two
dimensional domains.

Next we prove the estimate \eqref{closeness_in_k}. These arguments
are specific to three or higher dimensions. Recall the formula
\eqref{E} adapted to $a'$
\begin{align*}
F'(t,x',\theta',\theta)=e^{-\int_0^{\tau_+(x'+t\theta',\theta)}a'(x'+t\theta'+s\theta,\theta)ds}e^{-\int_0^t
a'(x'+s\theta',\theta')ds}
\end{align*}and note that $F=F'$ is a quantity preserved under the
gauge transformation \eqref{tildea-a}. This follows by direct
calculation and the fact that $\tilde{\varphi}=1$ on $\partial B_R$.
Then for almost all $(x',\theta')\in\Gamma^R_-$,
$t\in[0,\tau(x',\theta')]$ and $\theta\in S^{n-1}$, we have the
following lower bound
\begin{align}\label{lower_bound_E}
|F'(t,x',\theta',\theta)|\geq e^{-4R\Sigma}.
\end{align}Using the non-negativity of $\ta$ and $a'$ we estimate
\begin{align}
|[\tF-F']&(t,x',\theta',\theta)|\leq \left| e^{-\int_0^t
\ta(x'+s\theta',\theta')ds}-e^{\int_0^t
a'(x'+s\theta',\theta')ds}\right|\nonumber\\
&+\left|e^{-\int_0^{\tau_+(x'+t\theta',\theta)}\ta(x'+t\theta'+s\theta,\theta)ds}
-e^{-\int_0^{\tau_+(x'+t\theta',\theta)}a'(x'+t\theta'+s\theta,\theta)ds}\right|\nonumber\\
&\leq\left|\int_0^t [\ta-a'](x'+s\theta',\theta')ds\right|+
\left|\int_0^{\tau_+(x'+t\theta',\theta)}[\ta-a'](x'+t\theta'+s\theta,\theta)ds\right|\nonumber\\
&\leq\eps
e^{2R\Sigma}\left(\int_0^t\frac{ds}{\tau(x'+s\theta',\theta')}+
\int_0^{\tau_+(x'+t\theta',\theta)}\frac{ds}{\tau(x'+t\theta'+s\theta,\theta)}\right)\nonumber\\
&=\eps e^{2R\Sigma}\left(\frac{t}{\tau(x',\theta')}+
\frac{\tau_+(x'+t\theta',\theta)}{\tau(x'+t\theta',\theta)}\right)\leq2\eps
e^{2R\Sigma}.\label{E-E}
\end{align}The next to the last inequality uses \eqref{a1-tildea};
the following equality uses the fact that both maps
$s\mapsto\tau(x'+s\theta',\theta')$ and $s\mapsto
\tau(x'+t\theta'+s\theta,\theta)$ are constant in $s$, while the
last inequality uses the fact that $t\leq \tau(x',\theta')$ and
$\tau_+(x'+t\theta'+s\theta,\theta)\leq\tau(x'+t\theta'+s\theta,\theta)$.
Therefore we proved that for almost all $(x',\theta')\in\Gamma_-$,
$t\in[0,\tau(x',\theta')]$ and $\theta\in S^{n-1}$ we have
\begin{align}\label{E1-tildeE}
|[\tF-F'](t,x',\theta',\theta)|\leq 2\eps e^{2R\Sigma}.
\end{align}
Recall now the estimate \eqref{bal_estimate2} with respect to the
pairs $(a',k')$ and $(\ta,\tk)$:
\begin{align}
\int_0^{\tau_+(x_0',\theta_0')}\int_{S^{n-1}}|\tk-k'|(x_0'+t\theta_0',\theta',\theta)F'(t,x_0',\theta_0',\theta)d\theta
dt\leq\norm\tA-\A'\norm+\nonumber\\
+\|\tF-F'\|_\infty\int_0^{\tau_+(x_0',\theta_0')}\int_{S^{n-1}}\tk(x_0+t\theta_0',\theta_0',\theta)d\theta
dt.
\end{align}Now use the lower bound for $F'$ in \eqref{lower_bound_E}, the upper bound for
$\|\tF-F'\|_\infty$  in \eqref{E1-tildeE} and the hypothesis
$\|\tk\|_{\infty,1}\leq\rho$, to obtain
\begin{align}
\int_0^{\tau_+(x_0',\theta_0')}\int_{S^{n-1}}|\tk-k'|(x_0'+t\theta_0',\theta',\theta)d\theta
dt\leq\eps e^{4R\Sigma}\left(1+2\rho e^{2R\Sigma}\right).
\end{align}
Finally, integrating the formula above in
$(x_0',\theta_0')\in\Gamma_-$ with the measure
$d\xi(x_0',\theta_0')$, we get
\begin{align}
\|\tk-k'\|_1\leq \eps \pi Re^{4R\Sigma}\left(1+2\rho
e^{2R\Sigma}\right).
\end{align}Now choose
\begin{align*}
C=\max\{\pi Re^{4R\Sigma}\left(1+2\rho
e^{2R\Sigma}\right),{e^{2R\Sigma}}/{c_R}\}
\end{align*}with $c_R$ from \eqref{nontangential} to finish the proof of Theorem \ref{main_thm}.

\section{Preliminaries for two dimensional domains}\label{preliminaries_2D}
This section introduces the framework for the problem in two
dimensions. The results are mainly from \cite{stefanovUhlmann03}.

As above, ${T}$ denotes the operator defined by the left hand side
of \eqref{transport_eq} in $B_R\times S^1$ with $B_R\subset\Rm^2$.
The coefficients are extended to be 0 in $B_R\setminus \Omega$. By
the regularity assumption \eqref{bounded_coeff_2d}, the second and
the third terms of $T$ are bounded operators in
$L^\infty(B_R\times S^1)$. The first term is unbounded. We view
${T}$ as a (closed) unbounded operator on $L^\infty(B_R\times
S^1)$ with the domain
\[
D({T})= \{u\in L^\infty(B_R\times S^1);\; \theta\cdot\nabla u\in
L^\infty(B_R\times S^1),~~ u|_{\Gamma^R_-}\in
L^\infty(\Gamma^R_-)\}.
\]
To simplify notation, for $x\neq y$, we denote by
\[\widehat{x-y}=\arg(x-y)=\frac{x-y}{|x-y|},
\]the direction from $y$ to $x$. Also, let
\begin{align}
\label{att_y->x} 0<
E(y,x)=e^{-\int_0^1a(x-t(x-y);\widehat{x-y})dt}\leq 1
\end{align}denote the attenuation along the segment in the direction from $y$ to
$x$. The attenuations corresponding to $a'$ and $\ta$ will be
denoted by $E'$ and $\tE$, respectively.

 The boundary value problem
\eqref{transport_eq} and \eqref{bd_condition_in} is equivalent to
the operator equation
\begin{align}\label{integral_eq}
(I-M)u=Jf_-,
\end{align}
where, using \eqref{att_y->x},
\begin{align}
Jf_-(x,\theta)&=E(x-\tau_-(x,\theta)\theta,x)f_-(x-\tau_-(x,\theta)\theta,\theta)\label{J}\\
Kf(x,\theta)&=\int_{S^1}k(x,\theta',\theta)f(x,\theta')d\theta'\label{K},\quad\mbox{and}\\
Mf(x,\theta)&=\int_0^\infty
E(x-t\theta,x)Kf(x-t\theta,\theta)dt.\label{M}
\end{align}
Under the subcritical assumption
\begin{align}\label{subcritical_2D}
R\|k\|_\infty<1/2,
\end{align}
the operator $M:L^\infty(B_R\times S^1)\to L^\infty(B_R\times
S^1)$ is contractive and \eqref{integral_eq} has a unique solution
obtained by Neumann series. Moreover, for $f_-\in
L^\infty(\Gamma^R_-)$, we get $u=(I-M)^{-1}Jf_-\in D(T)$ has a
well defined trace in $L^\infty(\Gamma^R_+)$ given by
\begin{align}\label{trace}
\gamma[u](x,\theta):=
[(I-M)^{-1}Jf_-]|_{\Gamma^R_+}(x,\theta),~~(x,\theta)\in\Gamma^R_+;
\end{align}
see \cite{choulliStefanov99,stefanovUhlmann03}. Therefore the
albedo operator $\A:L^\infty(\Gamma^R_-)\to L^\infty(\Gamma^R_+)$
is bounded and has the Schwartz kernel
$\alpha(x,\theta,x',\theta')=\phi|_{\Gamma^R_+}(x,\theta;x',\theta')$,
where, for $(x',\theta')\in\Gamma^R_-$, the map
$(x,\theta)\mapsto\phi(x,\theta;x',\theta')$ is the fundamental
solution of \eqref{transport_eq} subject to the boundary condition
\begin{align}\label{phi-}
\phi|_{\Gamma^R_-}(\cdot,\cdot;x',\theta')=|n(x')\cdot\theta'|^{-1}\delta_{\{x'\}}(\cdot)\delta_{\{\theta'\}}(\cdot).
\end{align}
More precisely, as shown in \cite[Proposition
1]{stefanovUhlmann03}, $\alpha=\alpha_0+\alpha_1+\alpha_2$ with
\begin{align*}
\alpha_0+\alpha_1+\alpha_2=\gamma\phi_0+
\gamma\phi_1+\gamma\phi_2:=\phi_0|_{\Gamma^R_+}+
M\phi_0|_{\Gamma^R_+}+(I-M)^{-1}M^2\phi_0|_{\Gamma^R_+},
\end{align*}where
\begin{align}\phi_0&=E(x-\tau_-(x,\theta)\theta,x)\delta_{\{\theta'\}}(\theta)
\int_0^{\tau_+(x',\theta')}\delta(x-x'-t\theta')dt,\label{phi0}\\
\phi_1&=\frac{\chi(y)k(y,\theta',\theta)}{|\theta'\times\theta|}E(y-\tau_-(y,\theta')\theta',y)E(y,y+\tau_+(y,\theta)\theta)
\label{phi1}
,\\
0\leq\phi_2&\leq
d_R\|k\|_\infty^2\left(1-\ln|\theta'\times\theta|\right).\label{phi2}
\end{align}The constant $d_R$ depends on $R$ only, $\chi$ is the
characteristic function of $B_R$, and, for
$(x,\theta,x',\theta')\in B_R\times S^1\times\Gamma^R_-$,
$y=y(x,\theta,x',\theta')$ is the point of intersection of the
rays $x'\to x'+\infty\theta'$ and $x\to x-\infty\theta$. The
kernel $\beta$ in \eqref{2d_singular_decomposition} is then given
by
\begin{align}\label{beta}
\beta(x,\theta,x',\theta')=[\gamma\phi_1+
\gamma\phi_2](x,\theta,x',\theta'),~(x,\theta,x',\theta')\in\Gamma^R_+\times\Gamma^R_-,
\end{align}where $\gamma$ is the trace operator on $\Gamma^R_+$.

The following estimate from \cite{stefanovUhlmann03} is needed
later.
\begin{lemma}\label{SU_lemma}Let $\chi$ be the characteristic function of
$B_R$, $L(x',\theta')$ be the line through $x'$ in the direction
$\theta'$ and $dl(y)$ be the Lebesgue measure on the line. Then
\begin{align}
\int_0^\infty\chi(x-t\theta)\int_{L(x',\theta')}\frac{\chi(y)}{|x-t\theta-y|}dl(y)dt\leq
C(1-\ln|\theta'\times\theta|),
\end{align}where $C$ is a constant dependent on $R$ only.
\end{lemma}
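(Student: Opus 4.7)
The plan is to reduce the double integral to a one-dimensional integral with a logarithmic singularity whose strength is governed by $|\theta'\times\theta|$. Throughout I use that both inner and outer integrands are supported in sets of diameter at most $2R$ because of the cutoffs $\chi$.

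First, for fixed $z\in B_R$ I would bound the inner integral
\[
J(z)=\int_{L(x',\theta')}\frac{\chi(y)}{|z-y|}\,dl(y).
\]
Parametrize $y=x'+s\theta'$ so that $dl(y)=ds$, and let $h(z):=\mathrm{dist}(z,L(x',\theta'))=|(z-x')\cdot(\theta')^\perp|$ and $s_0$ be the (signed) arclength position of the foot of the perpendicular from $z$. Then $|z-y|^2=(s-s_0)^2+h(z)^2$, the support of $\chi$ forces $|s-s_0|\le 2R$, and a direct computation of the elementary integral yields
\[
J(z)\le \int_{-2R}^{2R}\frac{d\sigma}{\sqrt{\sigma^2+h(z)^2}}\le C_R\bigl(1+\bigl|\ln h(z)\bigr|\bigr),
\]
with $C_R$ depending only on $R$ (the integral equals $2\sinh^{-1}(2R/h)$).

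Next comes the key geometric observation for the outer integral. Writing $z=x-t\theta$, I compute
\[
h(t):=h(x-t\theta)=\bigl|(x-x')\cdot(\theta')^\perp - t\,\theta\cdot(\theta')^\perp\bigr|
=|\theta'\times\theta|\cdot|t-t_0|
\]
for some $t_0=t_0(x,x',\theta,\theta')$, because $|\theta\cdot(\theta')^\perp|=|\theta'\times\theta|$ in $\Rm^2$. Thus $h(t)$ is linear in $t$ with slope exactly $|\theta'\times\theta|$, and the factor $\chi(x-t\theta)$ confines $t$ to an interval of length $\le 2R$. Combining with Step 1,
\[
\int_0^\infty\chi(x-t\theta)J(x-t\theta)\,dt
\le C_R\int_0^{2R}\bigl(1+\bigl|\ln|\theta'\times\theta|+\ln|t-t_0|\bigr|\bigr)\,dt.
\]

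Finally, splitting the log and using $\int_0^{2R}|\ln|t-t_0||\,dt\le\int_{-2R}^{2R}|\ln|\sigma||\,d\sigma<\infty$ (a finite quantity depending only on $R$), together with the trivial bound $\int_0^{2R}|\ln|\theta'\times\theta||\,dt=2R|\ln|\theta'\times\theta||$, I arrive at
\[
\int_0^\infty\chi(x-t\theta)J(x-t\theta)\,dt\le C\bigl(1-\ln|\theta'\times\theta|\bigr),
\]
using $|\theta'\times\theta|\le 1$ so that $|\ln|\theta'\times\theta||=-\ln|\theta'\times\theta|\ge 0$. The main (but routine) subtlety is the geometric identification of the slope of $h(t)$ with $|\theta'\times\theta|$; everything else is elementary one-variable calculus with explicit constants depending only on $R$.
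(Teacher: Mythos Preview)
The paper does not give its own proof of this lemma; it is quoted from \cite{stefanovUhlmann03}. Your argument is the natural one: reduce the inner integral to $C_R(1+|\ln h(z)|)$ with $h(z)$ the distance from $z=x-t\theta$ to the line $L(x',\theta')$, then use that along the outer ray $h(t)=|\theta'\times\theta|\,|t-t_0|$ is linear in $t$ with slope $|\theta'\times\theta|$. The identification $|\theta\cdot(\theta')^\perp|=|\theta'\times\theta|$ is exactly the geometric point, and you have it right.

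There is one slip in your last step. The inequality
\[
\int_0^{2R}\bigl|\ln|t-t_0|\bigr|\,dt\le\int_{-2R}^{2R}\bigl|\ln|\sigma|\bigr|\,d\sigma
\]
tacitly assumes $t_0$ lies in (or within distance $2R$ of) the interval of integration. But $t_0$ is the parameter at which the line $t\mapsto x-t\theta$ meets $L(x',\theta')$, and this intersection point need not lie in $B_R$; when $|\theta'\times\theta|$ is small, $t_0$ can be of order $2R/|\theta'\times\theta|$ away from the interval $I$. In that regime $|\ln|t-t_0||\approx\ln(2R/|\theta'\times\theta|)$ on all of $I$, so the integral is of size $2R\,|\ln|\theta'\times\theta||$, not bounded by a constant depending only on $R$.

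This does not damage the conclusion, since the extra $|\ln|\theta'\times\theta||$ is precisely what the right-hand side absorbs. A clean fix: split $I$ into $\{|t-t_0|<1\}$, where $\int|\ln|t-t_0||\,dt\le\int_{-1}^{1}|\ln|\sigma||\,d\sigma=2$, and $\{|t-t_0|\ge1\}$, where $|\ln|t-t_0||=\ln|t-t_0|\le\ln(2R/|\theta'\times\theta|)$ because $h(t)\le 2R$ on $I$. Alternatively, skip the triangle inequality on the logarithm and use $J(z)\le 2\ln(4R/h(z))$ directly; then you only need a \emph{lower} bound on $\int_I\ln|t-t_0|\,dt$, which is elementary (the minimum over intervals of length $\le 2R$ and over $t_0$ occurs when $t_0$ is the midpoint).
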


\section{Stability of the equivalence classes in two dimensions}
In this section we work under the hypotheses in Section
\ref{preliminaries_2D} and prove Theorem \ref{main_thm_2d}.

Let $(a,k),(\ta,\tk)\in V_{\Sigma,\rho}$ be given with
$\|\A-\tA\|_*=\eps$. Define the pair $(a',k')$ in the equivalence
class of $\langle a,k\rangle$ by \eqref{tildea-a} and
\eqref{tildek-k} as before, i.e.
\begin{align*}
a'(x,\theta):=a(x,\theta)-\theta\cdot\nabla_x\ln\tilde{\varphi}(x,\theta),\quad
k'(x,\theta',\theta):=\frac{\tilde{\varphi}(x,\theta)}{\tilde{\varphi}(x',\theta')}k(x,\theta',\theta),
\end{align*}where $\tilde{\varphi}$ is given in
\eqref{closeness_varphi}.

Then the corresponding albedo operator $\A'=\A$ and, thus,
\begin{align}\|\A'-\tA\|_*=\|\A-\tA\|_*=\eps.\end{align}In particular,
\begin{align}\label{closeness_beta}
\|\tilde{\beta}-\beta'\|_\infty\leq\eps,
\end{align}and
\eqref{trial_gauge_estimate1} holds.

Starting with \eqref{trial_gauge_estimate1}, the same arguments as
the ones in the three dimensional domains, notably
\eqref{a1-tildea}, are valid in two dimensions to conclude the
estimate \eqref{final_estimate_for_a}:
\begin{align}\label{final_estimate_for_a_2D}
\|\ta-a'\|_\infty\leq \eps (e^{2R\Sigma}/{c_R})=:\eps\tilde{C}.
\end{align}In turn, \eqref{final_estimate_for_a_2D}
yields
\begin{align}\label{quotient_varphi}
\frac{\tilde{\varphi}(x,\theta)}{\tilde{\varphi}(x',\theta')}=
e^{-\int_0^{\tau_-(x,\theta)}(a'-\ta)(x-s\theta,\theta)ds+\int_0^{\tau_-(x',\theta')}(a'-\ta)(x-s\theta',\theta')ds}\leq
e^{4R\tilde{C}\eps}.
\end{align}

From the definition \eqref{tildek-k} and \eqref{quotient_varphi} we
now get
\begin{align}\label{rho'}
\|k'\|_\infty\leq\rho e^{4R\tilde{C}\eps}.
\end{align}

Let
\begin{align}\label{broken_path_att}
\tEE(y,\theta',\theta):=\tE(y-\tau_-(y,\theta')\theta')\tE(y,y+\tau_+(y,\theta)\theta).
\end{align}be the total attenuation along the broken path due to one scattering at
$y\in B_R$, when coming from the direction $\theta'$ and
scattering into the direction $\theta$. The formula \eqref{phi1}
now reads
\begin{align}
\tilde\phi_1(x,\theta,x',\theta')=\frac{\chi(y)[\tk\tEE](y,\theta',\theta)}{|\theta'\times\theta|},
~(x,\theta,x',\theta')\in B_R\times S^1\times\Gamma_-,
\label{phi1_bis}
\end{align}where $\chi(y)$ and $y=y(x,\theta,x',\theta')$
are as described above in Section \ref{preliminaries_2D}. We also
consider $E'_1$ and $\phi_1'$ defined similarly with the
attenuation $a'$ to replace $\ta$.

The relation with the quantity in \eqref{E},
$F'(t,x',\theta',\theta)=E'_1(x'+t\theta',\theta',\theta),$ allows
us to use the estimates \eqref{lower_bound_E} and \eqref{E-E} to
conclude
\begin{align}|E'_1(y,\theta',\theta)|&\geq e^{-4R\Sigma}, ~~(y,\theta',\theta)\in
B_R\times S^1\times S^1,\label{estimate_on_F}\\
\|\tEE-E'_1\|_\infty&\leq 2\eps
e^{2R\Sigma},\label{estimate_on_F-F}.
\end{align}

Now use \eqref{estimate_on_F}, \eqref{estimate_on_F-F},
\eqref{phi1_bis} and \eqref{beta} in $E'_1(\tk-k')=
(E'_1-\tEE)\tk+(\tEE\tk-E'_1k'),$ (evaluated at
$(y,\theta',\theta)$ with $y=y(x,\theta,x',\theta')$ as above,) to
estimate
\begin{align}
e^{-4R\Sigma}|\tk-k'|&\leq |E'_1-\tEE|\tk+ |\tEE\tk-E'_1k'|\nonumber\\
&\leq |E'_1-\tEE|\tk+
|\tilde\beta-\beta'|~|\theta\times\theta'|+|\gamma\phi_2'-\gamma\tilde{\phi}_2||\theta\times\theta'|\nonumber\\
&\leq 2\eps\rho
e^{2R\Sigma}+\eps+|\gamma\phi_2'-\gamma\tilde{\phi}_2||\theta\times\theta'|\label{foundation}.
\end{align}
We estimate the last term using the identity
\begin{align}
|\theta'\times\theta||\gamma\phi_2'-\gamma\tilde{\phi}_2|&=|\theta'\times\theta|\gamma(I-M')^{-1}M'^2\phi_0'
-|\theta'\times\theta|\gamma(I-\tilde{M})^{-1}\tilde{M}^2\tilde{\phi}_0\nonumber\\
&=|\theta'\times\theta|\gamma(I-M')^{-1}[M'^2\phi_0'-\tilde{M}^2\tilde{\phi}^2_0]\label{estimate_alpha2}\\
&+|\theta'\times\theta|\gamma(I-\tilde{M})^{-1}[M'-\tilde{M}](I-M')^{-1}\tilde{M}^2\tilde{\phi}_0\nonumber
\end{align}
To estimate the first term in the right hand side above we write
\[[M'^2\phi_0'-\tilde{M}^2\tilde{\phi}^2_0]=M'(M'-\tilde{M})\phi_0'+(M'-\tilde{M})\tilde{M}\phi_0'+\tilde{M}^2(\phi_0'-\tilde\phi_0)\]
and bound each of the terms as follows. From their definitions we
have
\begin{align*}
\tilde{M}M'&\phi_0'=\int_0^\infty\tE(x-t\theta,x)dt\times
\\
&\times\int_{L(x',\theta')}\frac{\tk(x-t\theta,\widehat{x-t\theta-y},\theta)
k'(y,\theta',\widehat{x-t\theta-y})}{|x-t\theta-y|}E'(x',y)E'(y,x-t\theta)dl(y).
\end{align*}
\begin{align*}
M'M'&\phi_0'=\int_0^\infty E'(x-t\theta,x)dt\times\nonumber
\\
&\times\int_{L(x',\theta')}\frac{k'(x-t\theta,\widehat{x-t\theta-y},\theta)
k'(y,\theta',\widehat{x-t\theta-y})}{|x-t\theta-y|}E'(x',y)E'(y,x-t\theta)dl(y).
\end{align*}
Since $(I-M')^{-1}$ is bounded in $L^\infty$, with a norm
dependent on the radius only, say $C(R)$, by adding and
subtracting one term and by using Lemma \ref{SU_lemma}, we
estimate

\begin{align}
|\gamma(I-M')^{-1}(\tilde{M}-M')M'\phi_0'|\leq&
C(R)\|\tE-E'\|_\infty\|k\|_\infty\|k'\|_\infty
(1-\ln|\theta'\times\theta)\nonumber\\
&+C(R)\|\tk-k'\|_\infty\|k'\|_\infty(1-\ln|\theta'\times\theta|).\label{prelim_I1}
\end{align}
Now, from \eqref{final_estimate_for_a_2D} we get
\begin{align}
\|\tE-E'\|_\infty\leq 2R\|\ta-a'\|\leq C(R,\Sigma,c_R)\eps,
\end{align}for some constant which only depends on $R,\Sigma,c_R$.

In what follows we keep the notation $C(R,\Sigma,c_R)$ for constants
that may be different from equation to equation but they only depend
on $R,\Sigma, c_R$ in an explicit, but inessential, way.

Using the fact that $0\leq t(1-\ln t)\leq 1$ for $t\in [0,1]$, the
bound $\|\tk\|\leq\rho$ and the bound in \eqref{rho'}, we get from
\eqref{prelim_I1} that
\begin{align}\label{I11}
|\theta'\times\theta||\gamma(I-M')^{-1}(\tilde{M}-M')M'\phi_0'|\leq
C(R,\Sigma,c_R)(\eps\rho^2+ \rho\|\tk-k'\|_\infty).
\end{align}

By reversing the roles of $M'$ and $\tilde{M}$, we get similarly
\begin{align}\label{I12}
|\theta'\times\theta||\gamma(I-M')^{-1}(M'-\tilde{M})\tilde{M}\phi_0'|\leq
C(R,\Sigma,c_R)(\eps\rho^2+ \rho\|\tk-k'\|_\infty).
\end{align}
Similarly, from the definition of $\tilde{M}^2$ as above, we also
get
\begin{align}\label{I13}
|\theta'\times\theta||\gamma(I-M')^{-1}\tilde{M}^2(\phi_0'-\tilde\phi_0)\leq
C(R,\Sigma,c_R)\eps\rho^2.
\end{align}

The estimates \eqref{I11}, \eqref{I12} and \eqref{I13} imply
\begin{align}\label{I1}
|\theta'\times\theta|\gamma(I-M')^{-1}[M'^2\phi_0'-\tilde{M}^2\tilde{\phi}^2_0]\leq
C(R,\Sigma,c_R)(\eps\rho^2+ \rho\|\tk-k'\|_\infty).
\end{align}

Next we estimate the second term of the right hand side of
\eqref{estimate_alpha2}. Since for any $f\in L^\infty(B_R\times
S^1)$, we have
\begin{align*}
|[M'-\tilde{M}]f(x,\theta)|&\leq\left|\int_0^\infty[E'-\tE](x-t\theta,x)\int_{S^1}k'(x-t\theta,\theta',\theta)f(x-t\theta)d\theta'dt\right|\\
&+\left|\int_0^\infty\tE(x-t\theta,x)\int_{S^1}[k'-\tk](x-t\theta,\theta',\theta)f(x-t\theta)d\theta'dt\right|\\
&\leq\left\{2R\|E'-\tE\|_\infty\|k'\|_\infty+2R\|k'-\tk\|_\infty\right\}\|f\|_\infty\\
&\leq
\left\{C(R,\Sigma,c_R)\eps\rho+2R\|k'-\tk\|_\infty\right\}\|f\|_\infty
\end{align*}we get
\begin{align}\label{I2}
|\theta'\times\theta|~|\gamma(I-\tilde{M})^{-1}(M'-\tilde{M})^{-1}\tilde{M}^2\tilde{\phi}_0|\leq
C(R,\Sigma,c_R)\left\{\eps \rho^3+\|k'-\tk\|_\infty\rho^2\right\}.
\end{align}
Since $\rho\leq 1$, by applying \eqref{I1} and \eqref{I2} in
\eqref{estimate_alpha2}, we get
\begin{align}
|\gamma[\phi_2']-\gamma[\tilde{\phi}_2]| ~|\theta\times\theta'|\leq
C(R,\Sigma,c_R)\left\{\eps\rho^2+ \rho\|\tk-k'\|_\infty\right\}.
\end{align}
Therefore the basic estimate \eqref{foundation} yields
\begin{align}
\|k'-\tk\|_\infty\leq C(R,\Sigma,c_R)\eps+
C(R,\Sigma,c_R)\rho\|k'-\tk\|_\infty.
\end{align}
By choosing
\begin{align}
\rho<\frac{1}{C(R,\Sigma,c_R)},
\end{align}we get the final estimate
\begin{align}\label{final_estimate_k}
\|k'-\tk\|_\infty\leq \frac{C(R,\Sigma,c_R)}{1-C(R,\Sigma,c_R)}\eps.
\end{align}
The constant $C$ from Theorem \ref{main_thm_2d} is the largest
between the constant in \eqref{final_estimate_for_a_2D} and
\eqref{final_estimate_k}.
\section{Concluding Remarks}
In the case of an anisotropic attenuating medium, the albedo
operator determines the attenuation and scattering properties up
to a gauge equivalence class. The set of gauge functions has a
natural structure of a multiplicative group which acts
transitively on the pairs of the coefficients.

We showed that the gauge equivalent classes are stably determined
by the albedo operator. We understand the distance between
equivalent classes to be the infimum of the distances between the
corresponding representatives.

The proof uses essentially the fact that, without loss of
generality, the problem can be transferred to a larger domain, and,
consequently, the total travel time (with respect to the larger
domain) of free moving particles in the interior domain stays away
from zero. The no loss of generality part is due to the extension of
an estimate in \cite{balJollivet08} to essentially bounded
coefficients.

The fact that we get Lipschitz  stability estimates in
\eqref{closeness_in_a}, \eqref{closeness_in_k} instead of
conditional H\"older stability estimates as in
\cite{balJollivet08} may seem strange. In fact, if we  assume that
$a$ and $\tilde a$ depend on $x$ only (or on $(x,|\theta|)$, if
$\theta$ belongs to an open velocity space), then
\eqref{closeness_in_a} implies
\[
\int(a-\tilde a)(x+t\theta) \, dt=O(\eps)
\]
in the $L^\infty$ norm, compare with
\cite[Theorem~3.2]{balJollivet08}. Then, by using interpolation
estimates, and the stability of the X-ray transform, we can get a
conditional H\"older stability estimate for $a-\tilde a$, similar to
the one in \cite[Theorem~3.4]{balJollivet08}.

\section*{Acknowledgment} The third author thanks Jason Swanson
for pointing out the approximation of the identity result in
\cite[Theorem 8.15]{folland}.

\end{document}